\documentclass{amsart}

\usepackage[letterpaper,margin=1in]{geometry}

\usepackage{amsmath,amssymb,amsthm}
\usepackage{mathabx}
\usepackage{hyperref}
\usepackage{mathtools}
\usepackage{tikz-cd}
\usepackage{enumerate}

\newtheorem{introthm}{Theorem}

\newtheorem{theorem}{Theorem}[section]
\newtheorem{proposition}[theorem]{Proposition}
\newtheorem{lemma}[theorem]{Lemma}
\newtheorem{corollary}[theorem]{Corollary}

\theoremstyle{definition}
\newtheorem{definition}[theorem]{Definition}

\theoremstyle{remark}
\newtheorem{remark}[theorem]{Remark}
\newtheorem{example}[theorem]{Example}

\numberwithin{equation}{section}

\newcommand{\R}{\mathbb{R}}
\newcommand\C{\mathbb{C}}
\newcommand\N{\mathbb{N}}

\newcommand{\Cst}{\mathrm{C}^*}

\newcommand{\cC}{\mathcal{C}}

\DeclareMathOperator{\tr}{tr}

\DeclareMathOperator{\id}{id}

\DeclareMathOperator{\Span}{Span}

\DeclareMathOperator{\inv}{inv}
\DeclareMathOperator{\Lip}{Lip}
\DeclareMathOperator{\dom}{dom}
\DeclareMathOperator{\sa}{sa}
\DeclareMathOperator{\ev}{ev}
\DeclareMathOperator{\sgn}{sgn}

\DeclarePairedDelimiter{\ip}{\langle}{\rangle}

\DeclarePairedDelimiter{\norm}{\lVert}{\rVert}

\title{Compact quantum metric spaces from free probability}

\author{David Jekel}
\address{\parbox{\linewidth}{Department of
		Mathematical Sciences, University of Copenhagen \\
		Universitetsparken 5, 2100 Copenhagen \O, Denmark}}
\email{daj@math.ku.dk}

\author{Therese Basa Landry}
\address{\parbox{\linewidth}{Department of
		Mathematics, South Hall, Room 6607, University of California, \\
		Santa Barbara, CA 93106, United States}}
\email{tlandry@ucsb.edu}

\subjclass{46L54, 46L87, 58B34}

\begin{document}
	
	\maketitle
	
	\begin{abstract}
		We study quantum metric space structures on operator algebras arising from free probability, namely those associated to $q$-Gaussians and free Gibbs laws for convex potentials.  We note that even for free semicirculars, Voiculescu's dual system does not produce a quantum metric space structure that recovers the weak-$*$ topology on the state space.  However, for $q$-Gaussians, we can define a compact quantum metric space using length-like functions by the same method as has already been used for hyperbolic groups, quantum groups of rapid decay, free products, and free graph algebras.  Next, motivated by the free transport results for free Gibbs laws, we describe a universal way of defining Lip-norms in terms of a generating set, which behaves well under changes of coordinates.  We show using semigroup regularization that this Lip-norm defines a quantum metric space structure for $q$-Gaussians, and then transfer this property to free Gibbs laws for convex potentials using free transport.
	\end{abstract}
	
	\section{Introduction}
	
	The goal of this work is to describe quantum metric space structures for operator algebras arising from free probability, namely $q$-Gaussians and free Gibbs laws.  Connes' programme of non-commutative geometry aims to study analogs of differential and Riemannian structure in a non-commutative setting \cite{Con89,Connes1994}.  In particular, the algebra of continuous functions $C(X)$ on a Riemannian manifold $X$ is replaced by a general $\mathrm{C}^*$-algebra $A$.  The differential structure is often encoded in an unbounded Fredholm module, or spectral triple.  This consists of a concrete representation of $A$ on a Hilbert space $H$ and a Dirac operator $D$ on $H$ such that $[D,a]$ is a bounded operator for $a$ in a dense subset of $A$.  The operator $[D,a]$ is analogous to a derivative of $a$, so $\norm{a}_{\Lip} := \norm{[D,a]}$ is an analog of the Lipschitz seminorm.  Under the natural assumption that the Lipschitz seminorm vanishes only on scalar multiples of the identity, we can dualize the Lipschitz seminorm to obtain a metric on the state space $S(A)$, given by
	\[
	\operatorname{mk}(\varphi,\psi) = \sup \{ |\varphi(a) - \psi(a)|: \norm{a}_{\Lip} \leq 1\},
	\]
	which is the analog of the $L^1$-Wasserstein distance as evaluated via Monge--Kantorovich--Rubinstein duality.  Building on the work of Connes, Rieffel formalized desirable properties of generalized Lipschitz seminorms, or Lip-norms, into the notion of a compact quantum metric space, and studied the corresponding Monge--Kantorovich distances \cite{Rieffel, Rieffel2,Rieffel4}.  In particular, he characterized when $\operatorname{mk}$ produces the weak-$*$ topology on $S(A)$; given a state $\varphi_0$ on $A$, one needs to check that $\{a \in A: \varphi_0(a) = 0, \norm{a}_{\Lip} \leq 1\}$ is totally bounded with respect to operator norm \cite[Theorem 1.8]{Rieffel}.  As Rieffel also considered Lip-norms which do not necessarily come from Dirac operators, spectral triples which give rise to compact quantum metric spaces are sometimes called \textit{spectral metric spaces} \cite{AguilarKaad2018,AustadKaadKyed2025,BeMaRe,HawkZach2017,KaadMikkelsen2024,KessSam2013,Klisse2024}.
	
	It seems natural to view objects from free probability in the quantum metric space framework, since free probability already has analogs of several of the main ingredients.  Indeed, the role of derivative is played by Voiculescu's free difference quotients, and related notion of dual and conjugate systems \cite{VoiculescuFE5}.  There is also a Wasserstein distance in the free probability setting, introduced by Biane and Voiculescu \cite{BV2001}, which defines a metric on the space of tracial states for a certain universal $\mathrm{C}^*$-algebra $C([-R,R])^{* d}$.  However, the focus of the Wasserstein distance in the free setting is somewhat different.  Quantum metric spaces construct a distance on $S(A)$ for a fixed $\mathrm{C}^*$-algebra $A$.  The free Wasserstein distance compares tracial states associated to possibly different von Neumann algebras $(M_1,\tau_1)$ and $(M_2,\tau_2)$ with diferent generators $(x_1,\dots,x_d)$ and $(y_1,\dots,y_d)$ by lifting both to traces on the universal free product $\mathrm{C}^*$-algebra $C([-R,R])^{* d}$ \cite[\S 1.2]{BV2001}.  The Wasserstein topology on $\mathcal{T}(C([-R,R])^{*d})$ also turns out to be much stronger than the weak-$*$ topology \cite[\S 5]{GangboJekelNamShlyakhtenko2022}, which is markedly different from what happens in the classical setting, but perhaps not surprising for such a large $\mathrm{C}^*$-algebra.
	
	Our present goal is define a Lip-norm on certain $\mathrm{C}^*$-algebras coming from free probability.  In particular, the prototypical example is the $\mathrm{C}^*$-algebra $A$ generated by a family of freely independent semicircular operators $x = (x_1, \dots, x_d)$.  This comes with a canonical trace $\tau$ and sits inside the tracial von Neumann algebra $M$, which is a isomorphic to the free group von Neumann algebra $L(F_d)$.  We view $A$ as acting on the Hilbert space $L^2(A,\tau)$.  This serves as a noncommutative analog of the coordinate-multiplication operators $x_j$ on $L^2(\R^d,\gamma)$ where $\gamma$ is the standard Gaussian measure.  In the classical setting, $i d/dx_j$ defines an unbounded operator on $L^2(\R^d,\gamma)$, which also satisfies $[d/dx_j, f(x)] = \partial_j f(x)$ as operators on $L^2(\R^d,\gamma)$, and in particular $[d/dx_j,x_k] = \delta_{j=k}$.
	
	Going back to the free setting, Voiculescu defined a \emph{dual system} as a family of operators $(Y_1,\dots,Y_d)$ on $L^2(A,\tau)$ satisfying $[Y_j, x_k] = \delta_{j=k} P_\Omega$, where $\Omega = \widehat{1}$ is the vacuum vector in the GNS Hilbert space \cite[Definition 5.1]{VoiculescuFE5}; see also \cite{Shlyakhtenko2006}.  One could first attempt to define a Lip-norm on $a$ by $\max_j \norm{[Y_j,a]}$, but this immediately fails to satisfy Rieffel's total boundedness condition.  Indeed, the operators $Y_j$ in the semicircular case turn out to be bounded operators \cite[Example 5.13]{VoiculescuFE5}, and so $\{a \in A: \tau(a) = 0, \norm{a}_{\Lip} \leq 1\}$ contains $\{a \in A: \tau(a) = 0, \norm{a} \leq r\}$ for some $r > 0$, hence is not totally bounded.  However, given that $[Y_j, x_k] = \delta_{j=k} P_\Omega$, we have that $[Y_j,f(x)]$ is a compact and in fact Hilbert--Schmidt operator for all polynomials $f$.  Thus, we could instead try $\max_j \norm{[Y_j,a]}_{HS}$ as a candidate for the Lipschitz norm.
	
	However, not even the Hilbert--Schmidt norm of $[Y_j,a]$ fulfills the total boundedness condition.  To see this, we need to describe $Y_j$ more explicitly in terms of Voiculescu's free difference quotient.  Letting $\C\ip{x}$ denote the space of polynomials in $x$, the difference quotient $\partial_j$ is a derivation $\C\ip{x} \to \C\ip{x} \otimes \C\ip{x} \subseteq L^2(A,\tau) \otimes L^2(A,\tau)$ satisfying $\partial_j (x_k) = \delta_{j=k} 1 \otimes 1$ \cite[\S 3]{VoiculescuFE5}.  There is a natural identification of $L^2(A,\tau) \otimes L^2(A,\tau)$ with the space of Hilbert--Schdmidt operators, where $a \otimes b$ corresponds to the operator $h \mapsto a \tau(bh)$.  This correspondence identifies the Hilbert--Schmidt operator $[Y_j,f(x)]$ with the tensor $\partial_j f(x)$; see \cite[Proposition 5.11]{VoiculescuFE5}.  We now consider the $n$th Chebyshev polynomial $g_n$ of the second kind, which satisfies $g_n(2 \cos \theta) = \sin((n+1)\theta) / \sin(\theta)$; the normalization here is chosen so that $g_n$ is defined on $[-2,2]$ and forms an orthonormal family with respect to the semicircular measure $(1/2\pi) \sqrt{4 - x^2}\,dx$; see e.g.\ \cite[p. 159]{MingoSpeicher2017}, \cite[\S 3]{Parraud2025Haagerup}.  It turns out that $\partial_j g_n(x_j) = \sum_{k=0}^{n-1} g_k \otimes g_{n-1-k}$ and $\partial_{j'} g_n(x_j) = 0$ for $j' \neq j$; see e.g.\ \cite[\S 2.3]{MingoSpeicher2013}.  We therefore have
	\begin{align*}
		\tau(g_n(x_k)) &= \delta_{n=0}, \\
		\norm{g_n(x_j)}_{L^2(A,\tau)} &= 1, \\
		\norm{g_n(x_j)}_A &= n + 1, \\
		\norm{\partial_j g_n(x_j)}_{L^2(A,\tau) \otimes L^2(A,\tau)} &= n^{1/2}.
	\end{align*}
	Hence, $\norm{g_n(x_j)}_A$ is not even \emph{bounded} by a constant times $\norm{\partial_j g_n(x_j)}_{L^2(A,\tau) \otimes L^2(A,\tau)} = \norm{[Y_j,g_n(x_j)]}_{HS}$, and so $\{f: \norm{\partial_j f}_{L^2(A,\tau) \otimes L^2(A,\tau)} \leq 1, \tau(f) = 0\}$ is not bounded in operator norm, let alone totally bounded.
	
	In short, bounds on the $L^2$-norm of free difference quotient are much weaker than bounds on the $L^2$ norm of the derivative in the classical setting.  We can seek to remedy this in two ways, either by putting a stronger norm on the tensor product (such as an operator norm), or by replacing the dual system $Y_j$ with a stronger differential operator, for instance a second-order differential operator.  The latter approach is the one we take here.  We remark that for free semicircular variables, $\sum_{j=1}^d \partial_j^* \partial_j$ turns out to be an operator that multiplies the $n$th order orthogonal polynomials in the semicircular family $n$.  It thus provides a natural analog of the length function on the free group.
	
	Length functions on groups have in fact been well-studied as a means of producing spectral triples \cite{AntonescuChristiansen2004,ChristRieffel2017,Con89,FarLandLarsPack2024,Klisse2024,OzRi2005hyperbolic,Rieffel3}.  Spectral metric spaces constructed from length functions have also been developed for quantum groups \cite{AustadKyed2026}. Suppose we are given a countable group $G$ and a length function $\ell: G \to \N$.  Consider the left regular representation of $G$ on $\ell^2(G)$, which generates the reduced group $\mathrm{C}^*$-algebra.  A Dirac operator $D$ on $\ell^2(G)$ can be defined as the diagonal operator of multiplication by $\ell$.  Ozawa and Rieffel \cite[Main Theorem 1.2]{OzRi2005hyperbolic} gave a sufficient condition to check whether the length spectral triple satisfies Rieffel's total boundedness condition, which they used to study hyperbolic groups \cite[\S 4]{OzRi2005hyperbolic}; they also showed that the hypotheses of their theorem are preserved under reduced free products \cite[\S 6]{OzRi2005hyperbolic}.  Their criterion has also been used by Aguilar, Hartglass, and Penneys to study spectral triples on free graph algebras and $\mathrm{C}^*$-algebra generated by Guionnet--Jones--Shlyakhtenko planar algebras \cite{AguilarHartglassPenneys2022}, which in particular includes the case of a free semicircular family.
	
	Our first main result applies Ozawa and Rieffel's criterion to study the length-like spectral triple for $q$-Gaussians.  The $q$-Gaussians for $q \in [-1,1]$, first studied in \cite{BoKuSp1997qgaussian,BoSp1991example,BoSp1996interpolations}, are a natural interpolation between a classical Gaussian family ($q = 1$), a free semicircular family ($q = 0$), and a fermionic family ($q = -1$).  This family of examples has inspired a long line of research, and for $q \in (-1,1)$, they have many similar properties to the free semicircular family (see e.g. \cite{MiyagawaSpeicher2023}), through often the proofs are much more challenging.  The $q$-Gaussians are constructed as operators on a certain Fock space, with the tracial state being realized by the vacuum vector (see \S \ref{subsec: q Gaussian setup}); the $n$th level of the Fock space corresponds to Wick words of length $n$, and this provides a natural filtration of $A$ in the sense of Ozawa and Rieffel.  We can then check Ozawa and Rieffel's conditions using Bo{\.z}ejko's Haagerup-like inequalities \cite{Bozejko1999ultracontractivity}, which leads to the following result.
	
	\begin{introthm} \label{thm: q Gaussian length}
		Let $q \in [-1,1)$.  Let $\mathcal{A}_q(H_{\R})$ be a $q$-Gaussian algebra over a finite-dimensional real Hilbert space $H_{\R}$ with ONB $(e_i)_{i \in I}$, canonical generators $(x_i)_{i \in I}$ corresponding to the basis elements, and $\pi$ its canonical representation on the $q$ Fock space $\mathcal{F}_q(H)$.  Let $P_n$ be the projection onto the $n$th summand $H^{\otimes n}$ in the $q$-Gaussian Fock space $\mathcal{F}_q(H)$ and $D_q = \sum_{n=1}^\infty n P_n$ as above.  Then $(\mathcal{A}_q(H_{\R}),\mathcal{F}_q(H),D_q)$ with representation $\pi$ is a spectral triple.  Furthermore, this spectral triple is also a spectral metric space on $\mathcal{A}_q(H_{\R})$.
	\end{introthm}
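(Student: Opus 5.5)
The plan is to apply Ozawa and Rieffel's criterion \cite[Main Theorem 1.2]{OzRi2005hyperbolic} to the filtration of $\mathcal{F}_q(H)$ by the levels $H^{\otimes n}$.

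\textbf{Spectral triple.} Let $\mathcal{P}$ denote the $*$-algebra of noncommutative polynomials in the $x_i$. This algebra is dense in $\mathcal{A}_q(H_{\R})$. Each generator satisfies $\pi(x_i) = a_i + a_i^*$, where the creation operator $a_i^*$ maps $H^{\otimes n}$ into $H^{\otimes n+1}$ and the annihilation operator $a_i$ maps it into $H^{\otimes n-1}$. A direct computation gives
\[
[D_q, a_i^*] = a_i^*, \qquad [D_q, a_i] = -a_i,
\]
so $[D_q,\pi(x_i)]$ is bounded. The Leibniz rule then shows $[D_q,\pi(p)]$ is bounded for every $p\in\mathcal{P}$. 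Since $\dim H<\infty$, each $H^{\otimes n}$ is finite-dimensional, so $D_q$ has compact resolvent. Finally $\ker[D_q,\cdot]$ on $\mathcal{P}$ consists of elements commuting with every $P_n$. Applying such an element $a$ to $\Omega$ gives $a\Omega \in P_0\mathcal{F}_q = \C\Omega$, and because $\Omega$ is separating (the vacuum state is a faithful trace for $q\in(-1,1)$; the case $q=-1$ needs a separate finite-dimensional check), $a$ is scalar.

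\textbf{Metric property via Ozawa--Rieffel.} The Wick-word map $a\mapsto a\Omega$ identifies $\mathcal{P}$ with the algebraic Fock space, and $A_n := \{a : a\Omega\in H^{\otimes n}\}$ plays the role of functions supported on words of length $n$. Ozawa and Rieffel need three properties.
\begin{enumerate}[(i)]
\item A Haagerup-type inequality: $\|a\| \le C(n+1)^{s}\|a\Omega\|$ for $a\in A_n$. This is Bo\.zejko's ultracontractivity inequality \cite{Bozejko1999ultracontractivity} for $q\in(-1,1)$. For $q=-1$ the algebra is finite-dimensional (a CAR/Clifford algebra) and the theorem is trivial, since the unit ball of a finite-dimensional space is totally bounded.
\item A control condition: the bound $\|[D_q,a]\|\ge \|[D_q,a]\Omega\| = \|D_q a\Omega\|$ controls $\sum_n n^2\|P_n a\Omega\|^2$.
\item A decomposition of $[D_q,a]$ compatible with the filtration: for $a\in A_n$, the operator $a$ maps $H^{\otimes m}$ into $\bigoplus_{|k-m|\le n}H^{\otimes k}$. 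This is Ozawa--Rieffel's ``bounded propagation'' hypothesis.
\end{enumerate}

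Concretely, let $\tau(a)=0$ and $\|[D_q,a]\|\le 1$, and write $a=\sum_{n\ge1} a_n$ with $a_n\in A_n$. Then
\[
\|a_n\| \le C(n+1)^s\|P_na\Omega\|.
\]
The key estimate is a polynomially weighted decay $\|P_n a\Omega\|\lesssim n^{-k}$ for arbitrary $k$, which would make the tail $\sum_{n>N}\|a_n\|$ uniformly small. The finite part then lies in a bounded subset of a finite-dimensional space, which gives total boundedness by \cite[Theorem 1.8]{Rieffel}.

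\textbf{Main obstacle.} The estimate $\|D_qa\Omega\|\le1$ alone only gives $\|P_na\Omega\|\le 1/n$, which against the polynomial factor $(n+1)^s$ does not produce a summable tail. This is exactly why Ozawa and Rieffel need their more refined argument. They decompose $[D_q,a]$ into pieces according to how far $a$ shifts levels, and bound the $n$-th component of $a$ in operator norm using $\|[D_q,a]\|$ together with the Haagerup inequality applied to \emph{compressions} $P_k a P_m$, rather than only to $a\Omega$.

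So the real work is to verify their abstract hypotheses. I expect the hardest step to be establishing a Haagerup inequality for the compressions $P_{m+j}\,a_n\,P_m$ with constants polynomial in $n$ and uniform in $m$. I would try to derive this from Bo\.zejko's estimate by writing Wick products via the $q$-Wick formula (creation parts of length $r$, annihilation parts of length $n-r$) and using Bo\.zejko's norm bounds on the $q$-annihilation operators on each level.
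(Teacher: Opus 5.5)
Your spectral-triple verification ($[D_q,a(e_i)^*]=a(e_i)^*$, $[D_q,a(e_i)]=-a(e_i)$, compact resolvent from $\dim H<\infty$) is fine. So is your separate finite-dimensional treatment of $q=-1$, where Bo\.zejko's constant degenerates. The gap is in the metric part: you have misidentified what Ozawa--Rieffel require, and the target you end with is too weak.

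Their theorem (Theorem~\ref{thm: OzRi main theorem}) has a single hypothesis: a constant $C$ such that $\|P_r a P_s\|\le C\|a\|_2$ for every homogeneous $a$ of degree $n$ and \emph{all} $n,r,s$. Bounded propagation (your (iii)) is automatic from $A_mA_n\subseteq A_{m+n}$. The Haagerup inequality with polynomial loss (your (i)) is not the hypothesis.

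The step you flag as hardest, a compression bound ``polynomial in $n$ and uniform in $m$'', gives nothing new. It follows immediately from (i), since $\|P_{m+j}a_nP_m\|\le\|a_n\|$ and (i) already bounds $\|a_n\|$ by a polynomial in $n$ times $\|a_n\Omega\|$. So it leaves you facing exactly the summability obstruction you identified yourself.

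Uniformity in $n$ is what makes the argument work. From $\|[D_q,a]\|\le 1$ you only get $\sum_n n^2\|a_n\|_2^2\le 1$. One can reduce to finitely many bands $r-s=j$, for instance by Fej\'er-averaging $e^{itD_q}ae^{-itD_q}$ and using $\|e^{itD_q}ae^{-itD_q}-a\|\le|t|\,\|[D_q,a]\|$. The degree tail on each band is then bounded by $\sup_s\|P_{s+j}(\sum_{n>N}a_n)P_s\|\le C\sum_{n>N}\|a_n\|_2\le CN^{-1/2}$. If $C$ is replaced by a constant growing like $(n+1)^{\sigma}$, this estimate survives only for $\sigma<1/2$.

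The missing idea, which is the entire content of the paper's proof, is why the compression constant does not depend on $n$. Write a degree-$n$ Wick word as $f=\sum_{k=0}^n f^{(k)}$, where $f^{(k)}$ is the normal-ordered part with $k$ annihilations and $n-k$ creations. Since $f^{(k)}$ maps $H^{\otimes s}$ into $H^{\otimes(s+n-2k)}$, for given $r,s$ the compression $P_rfP_s$ equals $P_rf^{(k)}P_s$ for the \emph{single} index $k=(s+n-r)/2$; it vanishes if no admissible $k$ exists.

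Bo\.zejko's Proposition~2.1 bounds each such compression $\|P_rf^{(k)}P_s\|$ by $C_q\|f\|_2$, with $C_q^{-1}=\prod_{m\ge1}(1-q^m)$ independent of $n$ and $k$. The factor $n+1$ in his Haagerup inequality comes only from summing the $n+1$ pieces, and a compression never sees more than one of them. Hence $\|P_rfP_s\|\le C_q\|f\|_2$ uniformly in $n,r,s$. Ozawa--Rieffel's theorem then gives the spectral metric space directly, and you do not need the decay estimates on $\|P_na\Omega\|$ you were looking for.
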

	
	In the free case $q = 0$, the operator $D_q$ is the ``free Laplacian'' $\sum_{j=1}^d \partial_j^* \partial_j$, so this result is covered by \cite{AguilarHartglassPenneys2022}.  Meanwhile, at $q = 1$, it is the classical Laplacian $\Delta$, and for general $q$, the operator corresponds to a certain $q$-Laplacian \cite[Proposition 30]{Dabrowski2014}.  Theorem \ref{thm: q Gaussian length} does not hold at $q = 1$ because the commutator $[\Delta, f(x)]$ fails to be bounded for polynomial $f$ since $[\Delta, f] g = \Delta(fg) - f \Delta g = (\Delta f) g + \ip{\nabla f, \nabla g}$, and thus, for instance, $[\Delta, x_j] = \partial_j$.  Hence, in the classical setting $\nabla$ or $|\nabla|$ would be more natural choices for a Dirac operator, while in the free setting the Laplacian is more effective at defining a quantum metric structure.  We also note that at $q = -1$, the algebra $\mathcal{A}_q(H_{\R})$ becomes finite-dimensional, and there is a well-known isomorphism between the $d$-variable fermionic Fock space and the space generated by $d$ qubits (see \cite[\S 2.2]{CarlenMaas2014analog}).  In \S \ref{subsec: fermionic}, we explicitly describe the fermionic Gaussian variables and the commutator with the operator $D_{-1}$ in terms of tensors of Pauli matrices acting on $(\C^2)^{\otimes d}$.
	
	Returning again to the free setting, we would like to know whether  $\sum_{j=1}^d \partial_j^* \partial_j$ can be used to define a spectral triple for more general families of operators $x_j$'s arising in free probability.  Unfortunately, very little is known about the finer properties of the free Laplacian.  For instance, even under reasonable free analytic assumptions such as finite free Fisher information or Lipschitz conjugate variables, it is unknown whether $\sum_{j=1}^d \partial_j^* \partial_j$ has compact resolvent in general, how to describe its eigenvectors, nor whether it has good properties relating the operator norm and $2$-norm in line with the rapid decay and ultracontractivity results that hold for $q$-Gaussians \cite{Bozejko1999ultracontractivity}.
	
	One case that appears the most tractable is that of free Gibbs laws associated with a convex potential $V$ \cite{GMS2006,GS2009,Jekel2020Entropy}.  These are the analogs of classical log-concave measures with density $e^{-V}$ for some convex $V$, and the free Gibbs laws in fact arise from the large-$n$ behavior of $n \times n$ matrix models with a log-concave density on $M_n(\mathbb{C})_{\sa}^d$ of the form $e^{-n^2 V}$ where $V = \tr(p)$ for some non-commutative polynomial, or more generally $V$ is a non-commutative power series or smooth function.  We know that under appropriate hypotheses on $V$, there exists a non-commutative smooth change of coordinates, or transport map, that transforms the tuple $(x_1,\dots,x_d)$ given by the free Gibbs law into a free semicircular family $(z_1,\dots,z_d)$.  Such maps were constructed by Guionnet and Shlyakhtenko \cite{GS2014} using an analog of results from optimal transport theory, and further generalizations and related results are given in \cite{Nelson2015transport,DGS2021,Jekel2022Expectation,JekelLiShlyakhtenko2022}.  However, even with the cheat code of transport, we do not immediately see whether the property of defining a Dirac operator for a spectral metric space passes from the free Laplacian in the one coordinate system to the free Laplacian in other, as this would require controlling the extra terms that arise from application of the chain rule for the free difference quotients when computing the commutator with the free Laplacian.
	
	We therefore present another approach to define Lip-norms that behave well under the types of coordinate changes that occur in free transport theory.  In fact, this is a general construction of Lip-norms that can be applied to any given self-adjoint tuple $(x_1,\dots,x_d)$ generating a unital $\mathrm{C}^*$-algebra $A$, and hence may be of wider interest for constructing quantum metric spaces.  For $f \in A$, define
	\begin{equation} \label{eq: universal Lipschitz norm}
		L_x(f) = \sup_{(\tilde{A}, \alpha, \beta)} \frac{\norm{\alpha(f) - \beta(f)}_{\tilde{A}}}{\max_j \norm{\alpha(x_j) - \beta(x_j)}_{\tilde{A}}},
	\end{equation}
	where $(\tilde{A},\alpha,\beta)$ ranges over unital $\mathrm{C}^*$-algebras $\tilde{A}$ and $*$-homomorphisms $\alpha, \beta: A \to \tilde{A}$.  Thus, the Lipschitz norm of $f$ measures how far the images of $f$ can differ under two $*$-homomorphisms compared with how much the images of the generators differ.  As motivation for this, consider that if $A$ is commutative, one can take $\tilde{A} = \mathbb{C}$ and the homomorphisms $\alpha$, $\beta$ to be point evaluations.
	
	The definition \eqref{eq: universal Lipschitz norm} is also quite flexible, since we can specify a subclass $\mathcal{C}$ of triples $(\tilde{A},\alpha,\beta)$ as the candidates for the supremum; we denote the resulting quantity by $L_{x,\mathcal{C}}(f)$. The subclass $\mathcal{C}$ can be chosen quite freely, as long as it contains enough nontrivial choices of differing $\alpha, \beta$.  For example, if we consider a spectral triple with Dirac operator $D$, then we can consider the set of maps $A \to B(H)$ given by $\alpha_t(a) = e^{itD} a e^{-itD}$ for $t$ in a certain interval, and then \eqref{eq: universal Lipschitz norm} results in something comparable to the spectral Lip-norm $\norm{[D,a]}_{B(H)}$.  Another natural class is $\mathcal{C}_{\tr}(A,\tau)$ for a given $\mathrm{C}^*$-algebra and faithful tracial state $\tau$, which we define as the class of triples $(\tilde{A},\alpha,\beta)$ where $\tilde{A}$ has a faithful trace $\tilde{\tau}$ and $\alpha$ and $\beta$ are trace-preserving unital $*$-homomorphisms; this is motivated by the couplings that appear in Biane and Voiculescu's definition of free Wasserstein distance \cite[\S 2.1]{BV2001}.
	
	In \S \ref{sec: couplings}, we develop several basic properties of the Lip-norms $L_{x,\mathcal{C}}$.  In particular, we show that they produce quantum metric space structures for $q$-Gaussians as well as free Gibbs laws associated to appropriate convex potentials.  We state a sample of the results here; see \S \ref{sec: couplings} for details.
	
	\begin{introthm}
		Fix a $\mathrm{C}^*$-algebra with a faithful tracial state $\tau$ generated by self-adjoints $(x_1,\dots,x_d)$.  Take $\mathcal{C} = \mathcal{C}_{\tr}(A,\tau)$, and define $L_{x,\mathcal{C}}$ by \eqref{eq: universal Lipschitz norm}.  Then
		\begin{enumerate}[(1)]
			\item The domain of $L_{x,\mathcal{C}}$ contains polynomials of $x$ (see Proposition \ref{prop: Lipschitz seminorm basic properties} (4)), and more generally non-commutative smooth functions in the sense of \cite{JekelLiShlyakhtenko2022} (see Lemma \ref{lem: smooth to Lipschitz}).
			\item $L_{x,\mathcal{C}}$ satisfies the Leibniz inequality (see Proposition \ref{prop: Lipschitz seminorm basic properties} (2)), finite diameter condition (see Proposition \ref{prop: Lipschitz seminorm basic properties} (1)), and MK duality (see Proposition \ref{prop: MK duality}).
			\item If $y$ is a tuple of self-adjoint operators which generate $A$ with $L_{x,\mathcal{C}}(y_j) < \infty$, then
			\[
			L_{x,\mathcal{C}}(f) \leq L_{y,\mathcal{C}}(f) \max_j L_{x,\mathcal{C}}(y_j)
			\]
			(see Lemma \ref{lem: change of coordinates}).
			\item $L_{x,\mathcal{C}}$ satisfies Rieffel's total boundedness condition, and thus defines a compact quantum metric space structure, in the following cases:
			\begin{itemize}
				\item If $x$ is a $q$-Gaussian $d$-tuple for $q \in [-1,1)$ (see Corollary \ref{cor: general Lip norm for q Gaussian}).
				\item If $x$ is a $d$-tuple described by a free Gibbs law for a potential $V$ satisfying the hypotheses of \cite[\S 8.1]{JekelLiShlyakhtenko2022} for instance (see Corollary \ref{cor: free Gibbs case}).
			\end{itemize}
		\end{enumerate}
	\end{introthm}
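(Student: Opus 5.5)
Since the theorem collects several claims, I will prove them in the order listed and keep each step short.

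\emph{Items (1)--(3).} I will compute directly from \eqref{eq: universal Lipschitz norm}.

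For the Leibniz inequality, fix a triple $(\tilde A,\alpha,\beta) \in \mathcal{C}$ and write
\[
\alpha(fg)-\beta(fg)=(\alpha(f)-\beta(f))\alpha(g)+\beta(f)(\alpha(g)-\beta(g)).
\]
Since $*$-homomorphisms are contractive, this gives $L(fg)\le L(f)\norm{g}+\norm{f}L(g)$.

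For polynomials, induction on monomials using the Leibniz rule gives finiteness, starting from $L_x(x_j)\le 1$.

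For the finite diameter condition, use the tracial coupling in which $\alpha=\beta$ lands in a free product or tensor product. More usefully, for $f$ with $\tau(f)=0$, compare $\alpha$ with a $\beta$ whose image is free from, or independent of, that of $\alpha$ in $A*A$ with the free product trace. Then the denominator is at most $2\max_j\norm{x_j}$, while the numerator is at least $\norm{f}$ up to a constant. This is a lower bound on $L$, which is the right direction for controlling $\norm{f}$ by $L(f)$, and it yields $\norm{f-\tau(f)}\lesssim L(f)$.

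MK duality means that $L$ is lower semicontinuous and vanishes only on scalars. Lower semicontinuity holds because $L$ is a supremum of norm-continuous functions. If $L(f)=0$, the free-coupling bound just described forces $f$ to be a scalar.

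For the change of coordinates (3), rewrite the ratio with $y$ as an intermediate:
\[
\frac{\norm{\alpha(f)-\beta(f)}}{\max_j\norm{\alpha(x_j)-\beta(x_j)}}=\frac{\norm{\alpha(f)-\beta(f)}}{\max_k\norm{\alpha(y_k)-\beta(y_k)}}\cdot\frac{\max_k\norm{\alpha(y_k)-\beta(y_k)}}{\max_j\norm{\alpha(x_j)-\beta(x_j)}}.
\]
The first factor is at most $L_y(f)$ and the second is at most $\max_k L_x(y_k)$. Smooth functions are handled in the same spirit: approximate them by polynomials and use the standard operator-norm estimates $\norm{g(X)-g(Y)}\le C\max_j\norm{X_j-Y_j}$ for noncommutative $C^1$ functions in the sense of \cite{JekelLiShlyakhtenko2022}.

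\emph{Total boundedness for $q$-Gaussians.} This is the heart of the argument. I plan to regularize with the $q$-Ornstein--Uhlenbeck semigroup $P_t=e^{-tD_q}$. First I would show $\norm{f-P_tf}\le C(t)L(f)$ with $C(t)\to0$ as $t \to 0$. The idea is to realize $P_t$ as a conditional expectation of a trace-preserving embedding $\alpha_t$, namely the Mehler-type rotation $x_j\mapsto e^{-t}x_j+\sqrt{1-e^{-2t}}\,x_j'$ into $\mathcal{A}_q(H\oplus H)$. This gives
\[
\norm{f-P_tf}=\norm{E(\alpha_0(f)-\alpha_t(f))}\le L(f)\max_j\norm{x_j-\alpha_t(x_j)}=O(\sqrt t)\,L(f).
\]
Next, Bo\.zejko's ultracontractivity shows that $P_t$ maps the $\norm{\cdot}$-ball into a set that is totally bounded in norm: $P_t$ damps level $n$ by $e^{-tn}$, level $n$ has operator norm bounded by a polynomial in $n$ times the $L^2$ norm, and each level is finite-dimensional. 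Combining the two estimates with the bound $\norm{f}\lesssim L(f)$ from the diameter argument gives total boundedness.

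\emph{Free Gibbs laws.} I will transfer the result through the transport map $y=F(x)$ of \cite{JekelLiShlyakhtenko2022}, which is smooth and has a smooth inverse. Applying (3) in both directions shows that $L_x$ and $L_y$ are equivalent seminorms, so total boundedness passes from the semicircular case to the free Gibbs case.

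I expect the main obstacle to be the diameter bound $\norm{f-\tau(f)}\le C\,L(f)$ through free couplings. This requires identifying, inside $\mathcal{C}_{\tr}$, a coupling under which $\alpha(f)-\beta(f)$ has norm comparable to $\norm{f}$. My first attempt will use the free product $A*A$ and the fact that for free copies $\norm{a\otimes1-1\otimes a}\ge\norm{a}$ when $\tau(a)=0$.
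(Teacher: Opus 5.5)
Your proposal is correct and follows the paper's route. It uses the same one-line estimates for the Leibniz rule, polynomials and change of coordinates, the diameter bound via the tensor (or free) product coupling, semigroup regularization plus Bo{\.z}ejko's estimates for $q$-Gaussians, and transport plus change of coordinates for free Gibbs laws. The inequality you flag as the main obstacle is immediate, since the contractive slice map $\id\otimes\tau$ sends $f\otimes 1-1\otimes f$ to $f-\tau(f)$. Your Mehler embedding $\mathcal{A}_q(H)\to\mathcal{A}_q(H\oplus H)$ is a valid substitute for the paper's shift dilation, because the argument only needs, for each fixed $t$, a trace-preserving coupling with $E\circ\alpha_t=P_t$, $E\circ\alpha_0=\id$ and $\max_j\norm{\alpha_t(x_j)-\alpha_0(x_j)}\to 0$.

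Two points to tighten. First, MK duality is not the same as lower semicontinuity plus trivial kernel; it follows from them by Rieffel's theorem, or directly by pulling back a state $\theta$ that norms $\alpha(f)-\beta(f)$ to $\theta\circ\alpha$ and $\theta\circ\beta$. Second, for smooth functions you need $\alpha(\ev_{A,\tau}(g)(x))=\ev_{\tilde{A},\tilde{\tau}}(g)(\alpha(x))$ for trace-preserving $\alpha$, which is exactly where the restriction to $\mathcal{C}_{\tr}$ enters.
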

	
	The proofs of (1), (2), (3) are elementary, and in fact our motivation for Definition \eqref{eq: universal Lipschitz norm} was to arrange that (3) will be satisfied.  To prove the total boundedness condition (4) for $q$-Gaussians, we use regularization by the heat semigroup $\Phi_t = e^{-tD_q}$ (see Proposition \ref{prop: total boundedness via semigroup}).  Because the semigroup $\Phi_t$ admits a dilation to automorphisms, we can estimate $\norm{a - \Phi_t(a)}$ in terms of the difference of two $*$-homomorphisms and hence in terms of $L_{x,\mathcal{C}}(a)$; but meanwhile we can use Bo{\.z}ejko's ultracontractivity result to show that $\Phi_t$ of the unit ball is totally bounded in operator norm.  This implies the total boundedness of $\{a: L_{x,\mathcal{C}}(a) \leq 1, \tau(a) = 0\}$ by an approximation argument.  Once we have proved the case of $q$-Gaussians, and in particular a free semicircular family for $q = 0$, we can transfer the total boundedness condition to the free Gibbs laws by using the free transport results from \cite{JekelLiShlyakhtenko2022} together with item (3) on the behavior of $L_{x,\mathcal{C}}$ under change of coordinates.
	
	The paper is organized into two sections, \S \ref{sec: q Gaussian length} describes the length spectral triple for $q$-Gaussians, and \S \ref{sec: couplings} develops general Lip-norms, establishes their behavior under change of variables, and applies them to $q$-Gaussian variables and free Gibbs laws using free transport theory.
	
	\subsection*{Funding}
	
	DJ was supported by an EU Horizon Marie Sk{\l}odowska Curie Action\footnote{Views and opinions expressed are those of the author(s) only and do not necessarily reflect those of the European Union or the Research Executive Agency. Neither the European Union nor the granting authority can be held responsible for them.}, FREEINFOGEOM, grant id: 101209517.
	
	\subsection*{Acknowledgements}
	
	We thank the Institute of Pure and Applied Mathematics for hosting the authors during the long program on noncommutative optimal transport in Spring 2025, as well as providing travel funding for DJ.  Further collaboration occurred during TBL's visit to Copenhagen in July 2025 which was supported by the Marie Curie grant FREEINFOGEOM.  We thank Eric Carlen, Bhishan Jacelon, Marius Junge, Nadia Larsen, and Dimitri Shlyakhtenko for stimulating talks and conversations.
	
	\section{Length spectral triple for $q$-Gaussians} \label{sec: q Gaussian length}
	
	\subsection{Background on noncommutative geometry}
	
	We assume familiarity with the basic theory of $\mathrm{C}^*$-algebras; see e.g. \cite{Murphy1990}.  Quantum Wasserstein metrics in the sense of Connes and Rieffel grant a $\mathrm{C}^*$-algebra $A$ access to the theory of metric geometry via $S(A)$.  In \cite{Con89, Connes1994}, Connes showed that the geodesic distance $d_g$ on a compact Riemannian spin manifold $X$ can be recovered from a metric on $S(A)$ determined by the $\mathrm{C}^*$-algebra $C(X)$, the Hilbert space $H$ of $L^2$-spinor fields, and the Dirac operator $D$.  He formalized the operator algebraic elements essential to his approach in the definition of a spectral triple, which we review below.   
	
	
	\begin{definition}
		[Connes \cite{Con89, Connes1994}]
		\label{def:spectraltriple}
		A \emph{spectral triple} $(A, H, D)$ consists of a unital $\Cst$-algebra $A$, a unital faithful representation $\pi$ of $A$ on a Hilbert space $H$, and a self-adjoint operator $$D : \text{dom}(D) \subseteq H \rightarrow H$$ such that
		\begin{align*}
			& (ST1) \text{ the operator $D$ has compact resolvent $R_{\lambda}(D)=(D - \lambda I)^{-1}$, $\lambda \in \mathbb{C} \setminus \sigma(D)$}, \\
			& (ST2) \text{ the set }\\
			& \qquad \qquad \qquad \{ a \in A : [D, \pi(a)] \text{ is densely defined and extends to a bounded operator on } H \} \\
			& \qquad \text{ is a dense subset of $A$.}
		\end{align*}
		Moreover, $D$ is called a \emph{Dirac operator}. 
	\end{definition}
	
	Spectral triples generalize differential structure.  Based on considerations from quantum mechanics, the differential of $a$ in $A$ is given by the commutator $[D, \pi(a)]$.  The dense set in $A$ described in condition $(ST2)$ is therefore analogous to the dense set of $C^{\infty}$ functions in the manifold case.  The compact resolvent condition in $(ST1)$ ensures that the eigenvalues of $D$ exhibit properties that allow for the extraction of geometric information like measure and dimension from spectral data.  In the context of spectral triples and other more general settings, Rieffel developed a corresponding notion of metric spaces in the noncommutative setting by investigating when Connes' metric on $S(A)$ recovers the weak-$*$ topology.  He codified these conditions in the following definition.
	
	\begin{definition}
		[Rieffel \cite{Rieffel, Rieffel2, Rieffel4}]
		\label{def:qcms}  
		A \emph{ compact quantum metric space} $(A, L)$ is an ordered pair of a unital $\Cst$-algebra $A$ and a seminorm $L$ such that
		\begin{enumerate}[(1)]
			\item $\operatorname{dom}(L) L:= \{ a \in A: L(a) < \infty \}$ is dense in $A$.
			\item $L$ is $^{\ast}$-invariant and lower semi-continuous on $A$.
			\item $\{ a \in \operatorname{dom}(L) : L(a) = 0 \} = \mathbb{C} 1_A$.
			\item The {\it Monge-Kantorovich metric } $\operatorname{mk}_L$, defined on the state space $\mathcal{S}(A)$ by
			\[
			\operatorname{mk}_L(\varphi, \psi) = \sup \{ | \varphi(a) - \psi(a) | : a \in \operatorname{dom}(L), L(a) \leq 1 \} \text{ for } \varphi, \psi \in \mathcal{S}(A),
			\]
			metrizes the weak$^*$ topology on $\mathcal{S}(A)$ of $A$.
		\end{enumerate}
		Moreover, $L$ is called a \emph{ Lip-norm}.  Furthermore, $L$ is said to be a \emph{ Leibniz seminorm} and $(A, L)$ a \emph{ Leibniz compact quantum metric space} if, for any $a,b \in A$,
		$$L(ab) \leq L(a)\| b \|_A + \| a \|_AL(b).$$
	\end{definition}
	
	
	Note that in the case $A = C(X)$ for a compact metric space, we can take $L$ to be the usual Lipschitz seminorm, and then $\operatorname{mk}_L$ becomes the dual formulation of the $L^1$-Wasserstein distance on $\mathcal{P}(X)$.  Moreover, pure states correspond to point masses, and so the restriction of $\operatorname{mk}_L$ to pure states recovers the original metric.  Condition (4) that $\operatorname{mk}_L$ metrizes the weak-$*$ topology is usually the most subtle to check, and Rieffel gave the following equivalent condition directly in terms of the Lip-norm and operator norm.  As motivation, the reader should keep in mind that the $1$-Lipschitz functions on a compact space $X$, with a given bound at a single point, form a compact set with respect to the uniform norm.
	
	\begin{theorem}[{\cite[Theorem 1.8]{Rieffel}}] \label{thm: total boundedness criterion}
		Let $A$ be a $\mathrm{C}^*$-algebra and let $L$ satisfy (1) - (3) of Definition \ref{def:qcms}.  Let $\varphi_0 \in \mathcal{S}(A)$.  Then $L$ satisfies condition (4) of Definition \ref{def:qcms} if and only if $\{a \in A: \varphi_0(a) = 0, \norm{a}_{\Lip} \leq 1\}$ is totally bounded with respect to operator norm.
	\end{theorem}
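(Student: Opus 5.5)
The plan is to prove both directions by working on the quotient $\tilde A = A_{\sa}/\R 1$ (after reducing to self-adjoint elements via $*$-invariance of $L$) and exploiting the duality between $\mathcal{S}(A)$ and the ``Lipschitz ball''. Throughout, write $B = \{a \in A: \varphi_0(a)=0, L(a)\le 1\}$ and $B_{\sa}$ for its self-adjoint part. Because $L$ is $*$-invariant, $a \in B$ implies $\operatorname{Re} a, \operatorname{Im} a \in B_{\sa}$ up to a factor, so total boundedness of $B$ is equivalent to that of $B_{\sa}$. Similarly, since states are hermitian, $\operatorname{mk}_L$ can be computed using only self-adjoint $a$ with $L(a)\le 1$, up to a factor of $2$. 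Also $\varphi(a)-\psi(a)$ is unchanged if $a$ is replaced by $a - \varphi_0(a)1$, and $L(a-\varphi_0(a)1)=L(a)$ because $L(1)=0$. Hence
\[
\operatorname{mk}_L(\varphi,\psi) \asymp \sup_{a \in B_{\sa}} |\varphi(a)-\psi(a)|.
\]

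For ($\Leftarrow$), suppose $B_{\sa}$ is totally bounded. Given $\epsilon>0$, pick a finite $\epsilon$-net $a_1,\dots,a_n$ of $B_{\sa}$. Then for states $\varphi,\psi$,
\[
\sup_{a\in B_{\sa}}|\varphi(a)-\psi(a)| \le \max_i |\varphi(a_i)-\psi(a_i)| + 2\epsilon.
\]
This shows that weak-$*$ convergence implies $\operatorname{mk}_L$ convergence. In particular, $\operatorname{mk}_L$ is finite, since $B$ is bounded. For the converse direction, use density of $\dom L$ (condition (1)): weak-$*$ convergence is tested on a dense set of bounded elements. Any $a \in \dom L$ is a multiple of an element of $B$ plus a scalar, so $\operatorname{mk}_L(\varphi_\alpha,\varphi)\to 0$ forces $\varphi_\alpha(a)\to\varphi(a)$. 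Combined with the uniform bound $\|\varphi_\alpha\|=1$, this gives weak-$*$ convergence. Alternatively, one can note that the identity map from $(\mathcal{S}(A),\text{weak-}*)$ to $(\mathcal{S}(A),\operatorname{mk}_L)$ is a continuous bijection from a compact space to a Hausdorff space, hence a homeomorphism. That $\operatorname{mk}_L$ is a genuine metric (it separates points) follows from the density of $\dom L$.

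For ($\Rightarrow$), which I expect to be the main obstacle, suppose $\operatorname{mk}_L$ metrizes the weak-$*$ topology. Then $(\mathcal{S}(A),\operatorname{mk}_L)$ is a compact metric space, so it has finite diameter $R$. The first step is to show $B$ is norm-bounded. For $a \in B_{\sa}$ we have $|\varphi(a)| = |\varphi(a)-\varphi_0(a)| \le R$ for every state $\varphi$, hence $\|a\|\le R$.

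Next, consider the map $\Gamma: B_{\sa} \to C(\mathcal{S}(A))$ defined by $\Gamma(a)(\varphi)=\varphi(a)$. It is isometric for the norm on self-adjoint elements, since $\|a\|=\sup_\varphi|\varphi(a)|$. Its image consists of functions that are $1$-Lipschitz for the metric $\operatorname{mk}_L$ (by the definition of $\operatorname{mk}_L$) and uniformly bounded by $R$. By Arzel\`a--Ascoli on the compact metric space $(\mathcal{S}(A),\operatorname{mk}_L)$, the image is totally bounded in the uniform norm, and therefore $B_{\sa}$ is totally bounded in operator norm.

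The delicate point is exactly that the $\operatorname{mk}_L$ topology coincides with the weak-$*$ topology. This is what makes $(\mathcal{S}(A),\operatorname{mk}_L)$ compact, so that Arzel\`a--Ascoli applies. Lower semicontinuity of $L$ is not needed for this direction. It matters only for identifying the closure of $B$, and I would check that no further use of it is required.
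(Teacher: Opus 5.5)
The paper does not prove this statement itself; it quotes it from Rieffel. Your argument is correct and is essentially Rieffel's original one:
\begin{itemize}
\item For ($\Leftarrow$), finite $\varepsilon$-nets of $B_{\sa}$ show that weak-$*$ convergence implies $\operatorname{mk}_L$-convergence. The converse implication comes from density of $\dom(L)$, or from the compact-to-Hausdorff argument.
\item For ($\Rightarrow$), you apply Arzel\`a--Ascoli to the isometric image of $B_{\sa}$ in $C(\mathcal{S}(A))$, which consists of $1$-Lipschitz functions bounded by the $\operatorname{mk}_L$-diameter, and this gives total boundedness.
\end{itemize}
Two small remarks. The reduction to self-adjoint elements is exact rather than ``up to a factor,'' since $\operatorname{Re} a, \operatorname{Im} a \in B_{\sa}$ whenever $a \in B$. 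You are also right that lower semicontinuity of $L$ is used in neither direction.
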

	
	While Lip-norms determined from Dirac operators as in the definition below are always $^*$-invariant, Leibniz and lower-semicontinuous, condition (2) or condition (3) of Definition \ref{def:qcms} may not be satisfied \cite{AguilarKaad2018}.  Since our spectral triples give seminorms satisfying these conditions, we adopt the following convention.
	
	\begin{definition}
		Let $(A, H, D)$ be a spectral triple with representation $\pi$ and for $x \in A$, let  $L_{D}(x) := \|[D, \pi(x)] \|_{B(H)}$.  Then $(A, H, D)$ is said to be a \emph{spectral metric space} if $(A, L_D)$ is a compact quantum metric space. 
	\end{definition}
	
	Examples of spectral metric spaces include group $\mathrm{C}^*$-algebras \cite{ChristRieffel2017,OzRi2005hyperbolic,Rieffel3} and crossed product $\mathrm{C}^*$-algebras \cite{AustadKaadKyed2025,BeMaRe}, as well as noncommutative solenoids \cite{FarLandLarsPack2024}, Bunce-Deddens algebras, noncommutative tori, and some of their generalizations \cite{HawkSkalStuZach2013, Klisse2024}.  Spectral metric space structures have also been constructed for Gibbs measures \cite{KessSam2013}, quantum groups \cite{AustadKyed2026}, quantum projective spaces \cite{KaadMikkelsen2024}, and quantum spheres including and beyond the Podle\'{s} spheres \cite{AguilarKaad2018,HawkZach2017}.
	
	We next recall the main result of Ozawa and Rieffel in \cite{OzRi2005hyperbolic}, which gives sufficient conditions to be able to define a compact quantum metric space via a length-like spectral triple.  Their criterion is applicable to the class of $\mathrm{C}^*$-algebras described below.
	
	\begin{definition}
		Let $A$ be a unital $\Cst$-algebra.  If there exists a family of finite-dimensional $^{\ast}$-closed subspaces $\{ {A}_n \}_{n \in \N_0}$ such that
		\begin{align*}
			& \qquad \text{(1) $\bigcup_{n=0}^{\infty} A_n$ is a unital complex dense $^{\ast}$-subalgebra of $A$, } \qquad \qquad \qquad \qquad \qquad \qquad \qquad \qquad \qquad \\
			& \qquad \text{(2) ${A}_0 = \C$,} \\
			& \qquad \text{(3) ${A}_m \subseteq {A}_n$ for $m < n$,}\\
			& \qquad \text{(4) ${A}_m {A}_n \subseteq {A}_{m+n}$ for all $m,n$,}
		\end{align*}
		then $A$ is said to be a \emph{filtered $\Cst$-algebra.}
	\end{definition}
	
	Ozawa and Rieffel used Haagerup type inequalities to build spectral metric spaces for hyperbolic group $\mathrm{C}^*$-algebras.  In fact, their techniques have been adapted to produce compact quantum metric space structures in settings as various as $\mathrm{C}^*$-algebras generated by planar algebras \cite{AguilarHartglassPenneys2022}, crossed products of $\mathrm{C}^*$-algebras by actions of discrete groups \cite{HawkSkalStuZach2013}, and quantum groups of rapid decay \cite{BhowVoigtZach2015}.
	
		
		\begin{theorem}[Ozawa--Rieffel {\cite[Main Theorem 1.2]{OzRi2005hyperbolic}}] \label{thm: OzRi main theorem}
			Let $A$ be a unital $\mathrm{C}^*$-algebra and $(A_n)_{n \in \N}$ a filtration as above, and let $D = \sum_{n \in \N} n P_n$.  Let $\varphi$ be a state on $A$, and suppose that there is some constant $C$ such that for all $n, r, s \in \mathbb{N}$, we have
			\[
			\norm{P_r a P_s} \leq C \norm{a_n}_{\varphi} \text{ for all } a \in A_n \ominus A_{n-1}.
			\]
			Then $(A,B(H_\varphi), D)$ with representation $\pi_{\varphi}$ is a spectral metric space.
		\end{theorem}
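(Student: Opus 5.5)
The plan is to verify the spectral triple axioms directly from the filtration, and then prove total boundedness through Rieffel's criterion, Theorem~\ref{thm: total boundedness criterion}. For the latter I would combine a Fejér-type smoothing in the ``band'' direction with a truncation in the filtration degree.

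\textbf{Setup and the spectral triple axioms.} Let $\xi_0 = \widehat{1} \in H_\varphi$, and let $P_n$ be the projection onto the image of $A_n \ominus A_{n-1}$. I will assume that $\varphi$ is faithful, so that $\pi_\varphi$ is faithful and isometric and these images give an orthogonal decomposition of $H_\varphi$.
\begin{itemize}
\item (ST1) holds because each $P_n$ has finite rank ($A_n$ is finite-dimensional) and the eigenvalues $n$ of $D$ tend to infinity.
\item Let $a \in A_m$. Property (4) of the filtration, together with $*$-closedness, gives $P_r a P_s = 0$ whenever $|r-s| > m$. Hence $[D,a] = \sum_{|r-s|\le m}(r-s)P_r a P_s$ is bounded, with norm at most $m(2m+1)\norm{a}$. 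Density of $\bigcup_n A_n$ then gives (ST2).
\item The kernel of $L_D$: if $[D,a] = 0$, then $a$ commutes with $P_0$, so $a\xi_0 \in \C\xi_0$. Faithfulness then gives $a \in \C 1$.
\item $*$-invariance and lower semicontinuity of $L_D$ are the standard facts for commutator seminorms.
\end{itemize}

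\textbf{Two estimates.} Fix $a \in \bigcup_n A_n$ with $\varphi(a) = 0$ and $\norm{[D,a]} \le 1$, and write $a = \sum_{n\ge1} a_n$ with $a_n \in A_n \ominus A_{n-1}$.

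The first estimate is an $L^2$ bound. Since $D\xi_0 = 0$, we have $[D,a]\xi_0 = D a\xi_0$, and therefore
\[
\sum_n n^2\norm{a_n}_\varphi^2 \le 1 .
\]

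The second estimate comes from the automorphisms $\alpha_t(b) = e^{itD}be^{-itD}$. These satisfy
\[
\norm{a - \alpha_t(a)} \le |t|\,\norm{[D,a]} .
\]
The Fejér mean $\sigma_N(b) = \int_{\mathbb{T}} F_N(t)\alpha_t(b)\,dt$ multiplies the band $b^{(k)} = \sum_{r-s=k}P_r b P_s$ by $(1-|k|/N)_+$ and is a contraction on $B(H_\varphi)$. It satisfies
\[
\norm{a - \sigma_N(a)} \le \int |t|F_N(t)\,dt = O\!\left(\frac{\log N}{N}\right),
\]
uniformly over the unit Lip-ball.

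\textbf{Total boundedness.} Split $a = h + t$ with head $h = \sum_{n\le M} a_n$ and tail $t = \sum_{n>M} a_n$.

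For the head, the hypothesis gives $\norm{a_n} \le (2n+1)C\norm{a_n}_\varphi$: $a_n$ has at most $2n+1$ nonzero bands, and each band is block-diagonal with blocks of norm at most $C\norm{a_n}_\varphi$. So $h$ ranges over a bounded subset of the finite-dimensional space $A_M$. That set is totally bounded, and so is its image under the contraction $\sigma_N$.

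For the tail, fix a band $k$ with $|k| < N$ and any $r$. By the hypothesis and Cauchy--Schwarz,
\[
\norm{P_r t P_{r-k}} \le C\sum_{n>M}\norm{a_n}_\varphi \le C\Bigl(\sum_{n>M}n^{-2}\Bigr)^{1/2} \le C M^{-1/2}.
\]
Block-diagonality of each band then gives
\[
\norm{\sigma_N(t)} \le (2N-1)\,C M^{-1/2} .
\]

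Given $\varepsilon > 0$, I will first choose $N$ so that $\norm{a - \sigma_N(a)} < \varepsilon$. Then I will choose $M$ so that $(2N-1)CM^{-1/2} < \varepsilon$. This places the Lip-ball within $2\varepsilon$ of a totally bounded set. The remaining elements of $\{a : \varphi(a) = 0,\ L_D(a) \le 1\}$ outside $\bigcup_n A_n$ should be handled by an approximation argument, using lower semicontinuity of $L_D$.

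\textbf{Main obstacle.} I expect the difficulty to be exactly that neither estimate alone suffices. The band estimate $\norm{a^{(k)}} \le 1/|k|$ from $[D,a]$ is not summable in $k$, and the hypothesis applied degree by degree loses a factor $2n+1$ against only $n^{-1}$ decay of $\norm{a_n}_\varphi$. The Fejér smoothing is what decouples the two. Its uniformity needs care: the constant must depend only on $\norm{[D,a]}$, with only the $\log N/N$ rate coming from the kernel. The band-wise Cauchy--Schwarz step must also genuinely use that the bound is uniform in $r$ and $s$.
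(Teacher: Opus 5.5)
The paper gives no proof of this statement; it is quoted from Ozawa--Rieffel, so I am judging your argument on its own. For $a \in \bigcup_n A_n$ the argument is sound. The Fej\'er bound $\norm{a-\sigma_N(a)} = O(\log N/N)$, the estimate $\sum_n n^2\norm{a_n}_\varphi^2\le 1$, the bound $\norm{a_n}\le(2n+1)C\norm{a_n}_\varphi$ from bandedness, and the blockwise Cauchy--Schwarz bound $\norm{\sigma_N(t)}\le(2N-1)CM^{-1/2}$ all check out. Choosing $N$ first and then $M$ is also the right order.

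The gap is the final step. In the paper's framework $L_D(a)=\norm{[D,\pi(a)]}$ is defined on all of $A$. Rieffel's criterion therefore asks for total boundedness of $S=\{a\in A:\varphi(a)=0,\ L_D(a)\le 1\}$, but you have only handled $S_0=S\cap\bigcup_nA_n$. Lower semicontinuity of $L_D$ gives $\overline{S_0}\subseteq S$, which is the wrong inclusion. You would need every $a\in S$ to be a norm limit of elements of $\bigcup_nA_n$ with uniformly bounded $L_D$, and nothing in the hypotheses provides such approximants.

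The fix is to avoid approximation and run your estimates directly on an arbitrary $a\in S$, defining $a_n\in A_n\ominus A_{n-1}$ by $a_n\xi_0=P_na\xi_0$. The Fej\'er estimate and $\sum_n n^2\norm{a_n}_\varphi^2\le 1$ only use $\norm{[D,a]}\le 1$, and $h=\sum_{n\le M}a_n$ still lies in a bounded subset of $A_M$. What must be justified is the identity $P_r(a-h)P_s=\sum_{n>M}P_ra_nP_s$ behind your tail bound; for $a\notin\bigcup_nA_n$ this is an infinite expansion.

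If $\varphi$ is a trace, this is easy, and that is the only case the paper needs, since the vacuum state on $\mathcal{A}_q(H_{\R})$ is a faithful trace. For $b\in A_s$ and $c\in A_r$ with $r+s<n$, we have $\varphi(c^*a_nb)=\varphi((cb^*)^*a_n)=0$, because $cb^*\in A_{r+s}\subseteq A_{n-1}$. So $P_ra_nP_s=0$ for $n>r+s$. Then $a\mapsto P_raP_s-\sum_{n\le r+s}P_ra_nP_s$ is norm-continuous and vanishes on $\bigcup_mA_m$, hence on all of $A$. Consequently your bound $\norm{P_r(a-h)P_s}\le C\sum_{n>M}\norm{a_n}_\varphi\le CM^{-1/2}$ holds on all of $S$.

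For non-tracial $\varphi$, the blocks $P_ra_nP_s$ need not vanish for $n>r+s$. The expansion of $P_raP_s$ is then a priori genuinely infinite, and its convergence to $P_raP_s$ is not automatic. You would need an extra ingredient, for instance norm approximants $a^{(j)}\in\bigcup_nA_n$ of $a$ with $\sup_j\norm{Da^{(j)}\xi_0}<\infty$, which would give the expansion by a uniform-tail argument. Lower semicontinuity does not supply this.
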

		
		We are now ready to extend Ozawa and Rieffel's results to the definition and study of spectral metric spaces for our foundational example: the $q$-Gaussians.
		
		\subsection{Background on $q$-Gaussians} \label{subsec: q Gaussian setup}
		
		Let $H$ be a Hilbert space, and let $q \in [-1,1]$.  For $\sigma \in S_n$, let $\inv(\sigma)$ denote the number of inversions of $\sigma$, or
		\[
		\inv(\sigma) = |\{(i,j): i < j, \sigma(i) > \sigma(j) \}|.
		\]
		Define a sesquilinear form $\ip{\cdot,\cdot}_q$ on $H^{\otimes n}$ by
		\[
		\ip{\xi_1 \otimes \cdots \otimes \xi_n, \eta_1 \otimes \dots \otimes \eta_n}_q = \sum_{\sigma \in S_n} q^{\inv(\sigma)} \ip{\xi_1,\eta_{\sigma^{-1}(1)}} \dots \ip{\xi_n,\eta_{\sigma^{-1}(n)}}.
		\]
		Note that $\ip{\cdot,\cdot}_0$ is the standard inner product on $H^{\otimes n}$.  Define a unitary $U_\sigma: H^{\otimes n} \to H^{\otimes n}$ by
		\[
		U_\sigma[\xi_1 \otimes \dots \otimes \xi_n] = \xi_{\sigma^{-1}(1)} \otimes \dots \otimes \xi_{\sigma^{-1}(n)}.
		\]
		Let
		\[
		W_q^{(n)} = \sum_{\sigma \in S_n} q^{\inv(\sigma)} U_\sigma.
		\]
		Then for $\xi, \eta \in H^{\otimes n}$,
		\[
		\ip{\xi, \eta}_q = \ip{\xi, W_q^{(n)} \eta}_0.
		\]
		Lemma 4 of \cite{BoSp1991example} demonstrates that $W_q^{(n)}$ is a bounded operator with respect to $\ip{\cdot,\cdot}_0$ with norm less than or equal to 
		\[
		\prod_{i=0}^{n-1}\, (1+q+ \cdots q^i).
		\]  
		A nontrivial fact, also proved in \cite{BoSp1991example}, is that $W_q^{(n)}$ is positive semidefinite for $q \in [-1,1]$ and positive definite with bounded inverse if $q \in (-1,1)$.  Hence, $\ip{\cdot,\cdot}_q$ defines an inner product on $H^{\otimes n}$ for $q \in (-1,1)$.  In the case where $q = 1$ or $-1$, then it defines an inner product on a quotient Hilbert space, which is the symmetric tensor product when $q = 1$ and the antisymmetric tensor product when $q = 1$.
		
		\begin{definition}[{\cite{BoKuSp1997qgaussian}}] ~
			\begin{enumerate}[(1)]
				\item Let $(H^{\otimes n})_q$ be $H^{\otimes n}$ equipped with the inner product $\ip{\cdot,\cdot}_q$.  Then the \emph{$q$-Gaussian Fock space} is the Hilbert space
				\[
				\mathcal{F}_q(H) = \C \Omega \oplus \bigoplus_{n \geq 1} (H^{\otimes n})_q.
				\]
				Here $\Omega$ is a unit vector called the \emph{vacuum vector}.  By convention, the $(H^{\otimes 0})_q := \C \Omega$. 
				$\newline$
				\item Given $\xi \in H$, the \emph{creation operator} $c(\xi)$ and the \emph{annihilation operator} $a(\xi)$ on $\mathcal{F}_q(H)$ is given by 
				\begin{align*}
					c(\xi) \Omega & = \xi, \\ 
					c(\xi) \xi_1 \otimes \cdots \otimes \xi_n & = \xi \otimes \xi_1 \otimes \cdots \otimes \xi_n,
				\end{align*}
				and 
				\begin{align*}
					a(\xi) \Omega & = 0, \\ 
					a(\xi) \xi_1 \otimes \cdots \otimes \xi_n & = \sum_{i=1}^n q^{i-1} \ip{\xi, \xi_i}_q \xi \otimes \cdots \otimes \widecheck{\xi}_i \otimes \cdots \otimes \xi_n,
				\end{align*}
				where $\widecheck{\xi}_i$ signifies that $\xi_i$ is deleted in the tensor.
			\end{enumerate}
		\end{definition}
		
		With respect to $\ip{ \cdot, \cdot}_q$, Bo{\.z}ejko and Speicher demonstrated that the annihilation and creation operators are adjoints of each other (Lemma 2, \cite{BoSp1991example}).  In fact, they showed in Lemma 4 of \cite{BoSp1991example} that the operators $c(\xi)$ and $a(\xi)$ are bounded operators on $\mathcal{F}_q(H)$ with
		\[
		\norm{c(\xi)}_q = \norm{a(\xi)}_q = \begin{cases}
			\frac{\norm{\xi}}{\sqrt{1-q}}, & 0 \leq q <1, \\
			\norm{\xi}, & -1 < q \leq 0.
		\end{cases}
		\]
		Hence, in the rest of the paper, we will write $a(\xi)^*$ rather than $c(\xi)$ in keeping with the notation of \cite{BoSp1991example}.  The creation and annihilation operators satisfy the $q$-relations $$a(\xi)a^{\ast}(\eta) - q a^{\ast}(\eta)a(\xi) = \ip{\xi, \eta}_H \mathbf{1} \qquad \qquad \qquad (\xi, \eta \in H)$$
		by \cite[Lemma 1]{BoSp1991example}.  The $q$-Gaussian variables are the self-adjoint operators $a(\xi) + a(\xi)^*$, which serve as a generalization of the classical Gaussians that arise in the $q = 1$ case.
		
		\begin{definition}
			[\cite{BoKuSp1997qgaussian}]
			Let $H_{\R}$ be a real Hilbert space and  $H_{\C}= \C \otimes_{\R} H_{\R} = H \oplus iH$ its complexification.  For $\xi \in H_{\C}$, set
			$$X(\xi) := a(\xi) + a^{\ast}(\xi) \in B(\mathcal{F}_q(H_{\C})).$$  Denote by $\mathcal{A}_q(H_{\R}) \subset B(\mathcal{F}_q(H_{\C}))$ the $\Cst$-algebra generated by all $X(\xi)$, $$\mathcal{A}_q(H_{ \R}) := \Cst(\, X(\xi) : \xi \in H_{\R} \, ),$$ and by $E$ the vacuum expectation state on $\mathcal{A}_q(H_{\mathbb{R}})$ given by $$E(T) = \langle \Omega, T\Omega  \rangle_q.$$ 
		\end{definition}

		\subsection{Proof of Theorem \ref{thm: q Gaussian length}} \label{subsec: q Gaussian length proof}

		
		We can now prove that the length-like spectral triple defines a compact quantum metric space structure for $q$-Gaussians.
		
		\begin{proof}[Proof of Theorem \ref{thm: q Gaussian length}]
			The GNS Hilbert space of $\mathcal{A}_q(H_{\R})$ for the vacuum expectation state is naturally isomorphic to the original $\mathcal{F}_q(H)$ via the map $\widehat{x} \mapsto x \Omega$.  Note that for $\xi \in H_{\R}$,
			\[
			X(\xi) \Omega = a^*(\xi)\Omega + a(\xi)\Omega = \xi.
			\] 
			For more general simple tensors, consider $\xi_1 \otimes \cdots \otimes \xi_n$, where $n \in \N$ and $\xi_1, \cdots, \xi_n \in H_{\R}$.  The normal ordered representation for the Wick product gives $\Psi(\xi_1 \otimes \cdots \otimes \xi_n) = \widehat{x} \in \mathcal{A}_q(H_{\R})$ such that $\widehat{x} \Omega  = \xi_1 \otimes \cdots \otimes \xi_n$ \cite[Proposition 2.7]{BoKuSp1997qgaussian}, hence
			\[
			E(\widehat{x}^*\widehat{x}) = \langle \widehat{x}^*\widehat{x} \Omega, \Omega \rangle_q =   \langle \widehat{x}\Omega, \widehat{x}\Omega \rangle_q = \langle \xi_1 \otimes \cdots \otimes \xi_n, \xi_1 \otimes \cdots \otimes \xi_n \rangle_q
			\]
			which vanishes if and only if $\xi_1 \otimes \cdots \otimes \xi_n=0$.  To apply the Ozawa-Rieffel criterion Theorem \ref{thm: OzRi main theorem}, define the filtration 
			\[
			A_n = \Span((a(\xi_1) + a(\xi_1)^*) \dots (a(\xi_n) + a(\xi_n)^*): \xi_j \in H_{\R} \text{ for } j \leq n ) 
			\]
			Fix choices of $n, r, s \in \mathbb{N}$ and an orthonormal basis $(e_i)_{i \in I}$ for $H_{\R}$.  Let $\vec{\imath}= (i_1, \cdots, i_n)$ denote a vector with entries in $I$ and $\psi_{\vec{\imath}}$ the Wick product of $e_{i_1}\otimes \cdots \otimes e_{i_n}$.  Then each $f \in A_n$ can be written as
			\begin{align*}
				f = \sum_{\vec{\imath} \in I^n} \alpha_{\vec{\imath}} \psi_{\vec{\imath}} & = \sum_{\vec{\imath} \in I^n} \alpha_{\vec{\imath}} \, \sum_{k=0}^n \,\,  \sum_{\sigma \in S_n \setminus (S_{n-k}\times S_k)} q^{|\sigma|} a^*(e_{\sigma^{-1}(i_1)}) \cdots a^*(e_{\sigma^{-1}(i_{n-k})}) \, a(e_{\sigma^{-1}(i_{n-k+1})}) \cdots a(e_{\sigma^{-1}(i_n)}) \\
				&:= \sum_{\vec{\imath} \in I^n} \alpha_{\vec{\imath}} \, \sum_{k=0}^n \psi_{\vec{\imath}}^{(k)} \\
				&:= \sum_{k=0}^n f^{(k)},
			\end{align*}
			where $\alpha_{\vec{\imath}} \in \mathbb{C}$, $\psi_{\vec{\imath}}^{(k)}$ is a linear combination of operators which perform $k$ annihilations and $n-k$ creations, and $f^{(k)}$ is a linear combination of all such $\psi_{\vec{\imath}}^{(k)}$ for a given $k$.  In particular, 
			\[
			P_r \, f \, P_s = \sum_{k=0}^n P_r \, f^{(k)} \, P_s,
			\]
			where each $P_r \, f^{(k)} \, P_s$ takes an element in $(H^{\otimes s})_q$ to one in $(H^{\otimes s+n -2k})_q$.  Observe that  $P_r \, f^{(k)} \, P_s$ is trivial if $k > s$ or $r \neq s+n-2k$.  When $P_r \, f^{(k)} \, P_s$ is nontrivial, the latter condition on the indices uniquely determines $k$, in which case,
			\[
			\| P_r \, f \, P_s \| = \| P_r \, f^{(k)} \, P_s \| \leq  C_q \| f \|_2,
			\]
			where
			\[
			C_q^{-1} = \Pi_{m=1}^{\infty}(1 - q^m), 
			\]
			and the inequality follows from \cite[Proposition 2.1]{Bozejko1999ultracontractivity}.  Therefore, we have $\norm{P_r f P_s} \leq C_q \norm{f}_2$ whenever $f \in A_n$.  As the same bound holds in the trivial case when $P_r f P_s = 0$, operators like $P_n$ can be used to define a Dirac operator on $\mathcal{A}_q(H_{\mathbb{R}})$.
			
		\end{proof}
		
		\subsection{The fermionic case} \label{subsec: fermionic}
		
		In this section, we describe the spectral triple from Theorem \ref{thm: q Gaussian length} more explicitly in the fermionic ($q = -1$) case.  We use the realization of $\mathcal{A}_{-1}(\C^d)$, also known as the Clifford algebra, on $(\C^2)^{\otimes d}$ described in \cite[\S 2.2]{CarlenMaas2014analog} and \cite[\S 6]{CarlenMaas2017gradient}.  The number operator $D = D_{-1}$ used in Theorem \ref{thm: q Gaussian length} is well-studied in the fermionic setting and also has a Lindbladian description in terms of a sum of second-order commutators; see \cite[\S 6]{CarlenMaas2017gradient}.  We will describe the action of the commutator with $D$ on the elements of $\mathcal{A}_{-1}(\R^d)$ using the coordinates in $(\C^2)^{\otimes d}$.
		
		Consider $\C^2$ with standard basis $(e_0,e_1)$ and $\C^d$ with standard basis $(e_1,\dots,e_n)$.  Recall that $\mathcal{F}_{-1}(\R^d)$ is the antisymmetric tensor algebra generated by $\C^d$.  Thus, each simple tensor $e_{i(1)} \otimes \dots \otimes e_{i(k)}$ is identified with $\sgn(\sigma)(e_{i(\sigma(1))} \otimes \dots \otimes e_{i(\sigma(k))}$ for a permutation $\sigma$ of $[k]$.  In particular, the vector is zero if the indices $i(1)$, \dots, $i(k)$ are not distinct, and so the tensors of order larger than $d$ all vanish as well.  An orthonormal basis for $\mathcal{F}_{-1}(\R^d)$ can therefore by given by vectors of the form $e_{i(1)} \otimes \dots \otimes e_{i(k)}$ where $0 \leq k \leq d$ and $i(1) < i(2) < \dots < i(k)$ (with the $k = 0$ case being the vacuum vector).
		
		Let $V: \mathcal{F}_{-1}(\C^d) \to (\C^2)^{\otimes d}$ be the unitary transformation given as follows:  For each $k \in \{0,\dots,d\}$ and increasing function $i: [k] \to [d]$, let $\widehat{\imath}: [d] \to \{0,1\}$ be the indicator function of $i([k]) \subseteq [n]$, and set
		\[
		V(e_{i(1)} \otimes \dots \otimes e_{i(k)}) = e_{\widehat{\imath}(1)} \otimes \dots \otimes e_{\widehat{\imath}(d)}.
		\]
		In other words, $V$ sends the vector $e_{i(1)} \otimes \dots \otimes e_{i(d)}$ to the simple tensor in $(\C^2)^{\otimes d}$ which has $e_1$ in the position $i(1)$, \dots, $i(k)$ and $e_0$ in the other positions.
		
		Let $E_{i,j}$ for $i, j \in \{0,1\}$ denote the matrix units in $M_2(\mathbb{C})$, and recall the Pauli matrices
		\[
		\sigma_x = \begin{bmatrix} 0 & 1 \\ 1 & 0 \end{bmatrix}, \quad
		\sigma_y = \begin{bmatrix} 0 & i \\ -i & 0 \end{bmatrix}, \quad
		\sigma_z = \begin{bmatrix} 1 & 0 \\ 0 & -1 \end{bmatrix}.
		\]
		
		\begin{proposition}
			With the notation above, we have the following identities:
			\begin{enumerate}
				\item $V a(e_j)^* V^* = \sigma_z^{\otimes (j-1)} \otimes E_{1,0} \otimes I^{\otimes (n-j)}$.
				\item $V x_j V^* = \sigma_z^{\otimes (j-1)} \otimes \sigma_x \otimes I^{\otimes (d-j)}$.
				\item $V D V^* = \sum_{j=1}^n I^{\otimes (j-1)} \otimes E_{1,1} \otimes I^{\otimes (d-j)}$.
				\item $i V [D,x_j] V^* = \sigma_z^{\otimes (j-1)} \otimes \sigma_y \otimes I^{\otimes (d-j)}$.
			\end{enumerate}
		\end{proposition}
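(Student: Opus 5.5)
Everything follows from a direct computation on the orthonormal basis $\{e_{i(1)} \otimes \dots \otimes e_{i(k)} : i(1) < \dots < i(k)\}$ of $\mathcal{F}_{-1}(\C^d)$ and its image under $V$, the computational basis of $(\C^2)^{\otimes d}$. We prove (1) first. Items (2)--(4) then follow by algebra, using $x_j = a(e_j) + a(e_j)^*$ and the fact that $D$ is diagonal in this basis.

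\textbf{Step 1: the creation operator.} Fix a basis vector $\eta = e_{i(1)} \otimes \dots \otimes e_{i(k)}$ with increasing indices, and set $S = i([k])$.
\begin{itemize}
\item If $j \in S$, antisymmetry gives $a(e_j)^* \eta = e_j \otimes \eta = 0$. The right-hand side of (1) also gives $0$, since $E_{1,0} e_1 = 0$ in slot $j$.
\item If $j \notin S$, let $m = |\{ l : i(l) < j\}|$. Moving $e_j$ past $m$ factors into its sorted position costs the sign $(-1)^m$. So $a(e_j)^*\eta$ is $(-1)^m$ times the basis vector indexed by $S \cup \{j\}$.
\item On the tensor side, $\sigma_z^{\otimes(j-1)}$ contributes $-1$ for each slot $l<j$ holding $e_1$, i.e.\ exactly $(-1)^m$. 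The factor $E_{1,0}$ flips slot $j$ from $e_0$ to $e_1$, and the identity factors leave the remaining slots unchanged.
\end{itemize}
This is the Jordan--Wigner identity (1). The statement writes $I^{\otimes(n-j)}$ for $I^{\otimes(d-j)}$, and we read $n$ as $d$ throughout.

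\textbf{Step 2: the generators and $D$.} Taking adjoints in (1) and using $\sigma_z^* = \sigma_z$ gives
\[
V a(e_j) V^* = \sigma_z^{\otimes(j-1)} \otimes E_{0,1} \otimes I^{\otimes(d-j)}.
\]
Adding the two and using $E_{1,0} + E_{0,1} = \sigma_x$ yields (2). For (3), the operator $D$ multiplies a basis vector of length $k$ by $k$. Under $V$, $k$ is the number of slots equal to $e_1$. This number is the eigenvalue of $\sum_j I^{\otimes(j-1)} \otimes E_{1,1} \otimes I^{\otimes(d-j)}$ on the computational basis, since $E_{1,1}$ is the projection onto $e_1$.

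\textbf{Step 3: the commutator.} Write $N = \sum_l I^{\otimes(l-1)} \otimes E_{1,1} \otimes I^{\otimes(d-l)}$, so $V D V^* = N$. The summand with $l \neq j$ commutes with $V x_j V^*$ from (2): for $l < j$ because $E_{1,1}$ commutes with $\sigma_z$, and for $l > j$ because it meets only an identity factor. So only the slot-$j$ term survives, and
\[
V[D, x_j]V^* = \sigma_z^{\otimes(j-1)} \otimes [E_{1,1}, \sigma_x] \otimes I^{\otimes(d-j)}.
\]
Using $E_{1,1}\sigma_x = E_{1,0}$ and $\sigma_x E_{1,1} = E_{0,1}$, we get $[E_{1,1}, \sigma_x] = E_{1,0} - E_{0,1}$. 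Multiplying by $i$ gives the matrix with $-i$ in position $(0,1)$ and $i$ in position $(1,0)$. With the paper's normalization $\sigma_y = \begin{bmatrix} 0 & i \\ -i & 0 \end{bmatrix}$, this equals $-\sigma_y$, not $\sigma_y$.

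So the sign in (4) depends on conventions: the orientation of the commutator $[D,x_j]$ versus $[x_j,D]$, and the sign of the off-diagonal entries of $\sigma_y$. I would check this once more and adjust either the stated $\sigma_y$ or the commutator orientation. This sign bookkeeping, together with the $(-1)^m$ in Step 1, is the only point requiring care; the rest is routine.
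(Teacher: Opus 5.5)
Your proposal is correct and follows the paper's own argument: the case split with the sign $(-1)^m$ for (1), adjoint-and-add for (2), counting the $e_1$-slots for (3), and for (4) exactly the paper's alternative route through (1) and (3), using that $E_{1,1}$ commutes with $\sigma_z$ and $I$. Your sign check is right and is a catch, not a gap: since $[D,x_j]=a(e_j)^*-a(e_j)$, the middle factor $i(E_{1,0}-E_{0,1})$ is the standard Pauli matrix with $-i$ in entry $(0,1)$ and $i$ in entry $(1,0)$, which is the negative of the $\sigma_y$ displayed in the paper, so (4) holds only with the standard $\sigma_y$; the paper's proof passes over this sign, and, as you note, $n$ should be read as $d$.
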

		
		\begin{proof}
			(1) Suppose that $i(1) < \dots < i(k)$ and consider the vector $e_{i(1)} \otimes \dots \otimes e_{i(k)} \in \mathcal{F}_{-1}(\C^d)$.
			
			\textbf{Case 1:} Suppose $j \in i([k])$.  Then $a(e_j)^*[e_{i(1)} \otimes \dots \otimes e_{i(k)}] = 0$ by antisymmetry of the tensor product.  Meanwhile, the vector $e_{\widehat{i}(1)} \otimes \dots \otimes e_{\widehat{i}(n)}$ has $e_1$ in the $j$th position, which will be annihilated by the operator $E_{1,0}$.
			
			\textbf{Case 2:} Suppose $j \not \in i([k])$, and let $m$ be the number of indices in $i([k])$ that are less than $j$.  Then by antisymmetry
			\[
			a(e_j)^*[e_{i(1)} \otimes \dots \otimes e_{i(k)}] = (-1)^m e_{i(1)} \otimes \dots \otimes e_{i(m)} \otimes e_j \otimes e_{i(m+1)} \otimes \dots \otimes e_{i(k)}.
			\]
			Meanwhile, $e_{\widehat{i}(1)} \otimes \dots \otimes e_{\widehat{i}(n)}$ has exactly $m$ occurrences of $e_1$ in the indices $\{1,\dots,j-1\}$ and the $j$th tensorand is $e_0$.  Therefore,
			\begin{align*}
				(\sigma_z^{\otimes (j-1)} \otimes E_{1,0} \otimes I^{\otimes (d-j)})(e_{\widehat{i}(1)} \otimes \dots \otimes e_{\widehat{i}(d)}) &= (-1)^m e_{\widehat{i}(1)} \otimes \dots \otimes e_{\widehat{i}(j-1)} \otimes e_1 \otimes e_{\widehat{i}(j+1)} \otimes \dots \otimes e_{\widehat{i}(d)} \\
				&= V[(-1)^m e_{i(1)} \otimes \dots \otimes e_{i(m)} \otimes e_j \otimes e_{i(m+1)} \otimes \dots \otimes e_{i(k)}].
			\end{align*}
			
			(2) This follows by adding (1) and its adjoint.
			
			(3) Recall that the operator $D$ multiplies the simple tensor $e_{i(1)} \otimes \dots \otimes e_{i(k)}$ by a factor of $k$.  The corresponding vector in $e_{\widehat{i}(1)} \otimes \dots \otimes e_{\widehat{i}(d)}$ will have exactly $k$ tensorands that are $e_1$ and $d-k$ that are $e_0$.  Hence, there are $k$ values of $j$ for which the operator $I^{\otimes (j-1)} \otimes E_{1,1} \otimes I^{\otimes (d-j)}$ will fix the vector and $d - k$ for which it annihilates the vector.  Hence, $\sum_{j=1}^n I^{\otimes (j-1)} \otimes E_{1,1} \otimes I^{\otimes (n-j)}$ will multiply the vector $e_{\widehat{i}(1)} \otimes \dots \otimes e_{\widehat{i}(d)}$ by a factor of $k$.
			
			(4) We already know that $[D,x_j] = a(e_j)^* - a(e_j)$ and so (4) follows from (1) and its adjoint.  Alternatively, (4) follows from (1) and (3) since $E_{1,1}$ commutes with $\sigma_z$ and $I$.
		\end{proof}
		
		\section{Lipschitz seminorms via non-commutative couplings} \label{sec: couplings}
		
		\subsection{Definition and duality} \label{subsec: universal Lipschitz norm}
		
		We will define Lipschitz seminorms for a $\mathrm{C}^*$-algebra $A$ in terms of $*$-homomorphisms $\alpha$ and $\beta$ from $A$ into some other $\mathrm{C}^*$-algebra $\tilde{A}$.  As motivation, suppose that $K$ is a compact subset of $[-1,1]^m$ equipped with the $\ell^\infty$ metric, and let $x_1$, \dots, $x_m$ be the coordinate functions.  Then we could take $\tilde{A} = \C$ and $\alpha(f) = f(a)$ and $\beta(f) = f(b)$ for two points $a$, $b \in K$.  Then
		\[
		\frac{|f(a) - f(b)|}{\max_j |a_j - b_j|} = \frac{|\alpha(f) - \beta(f)|}{\max_j |\alpha(x_j) - \beta(x_j)|},
		\]
		and thus $\norm{f}_{\Lip}$ is the supremum of this quantity over the pairs of $*$-homomorphisms $\alpha$, $\beta$ associated to points $a$ and $b$.  More generally, if $\tilde{A} = C(K')$ for some other compact space $K'$ and if $a, b: K' \to K$ are continuous, there are $*$-homomorphisms $\alpha, \beta: C(K) \to C(K')$ given by $\alpha(f) = f \circ a$ and $\beta(f) = f \circ b$, and
		\[
		\norm{f \circ a - f \circ b} \leq \norm{f}_{\operatorname{Lip}} \max_j \norm{x_j \circ a - x_j \circ b}.
		\]
		This motivates us to define a Lipschitz seminorm on a general $\mathrm{C}^*$-algebra $A$ with generating set $x_1$, \dots, $x_m$ by the formula
		\[
		\norm{f}_{\Lip} = \sup_{\substack{(\tilde{A},\alpha,\beta) \\ \alpha \neq \beta}} \frac{\norm{\alpha(f) - \beta(f)}}{\max_j \norm{\alpha(x_j) - \beta(x_j)}},
		\]
		where $(\tilde{A},\alpha,\beta)$ ranges over triples where $\tilde{A}$ is a $\mathrm{C}^*$-algebra and $\alpha, \beta: A \to \tilde{A}$ are $*$-homomorphisms such that $\alpha \neq \beta$.
		
		However, there is some question, or flexibility, over whether we want to allow all possible separable $\mathrm{C}^*$-algebras $\tilde{A}$ and embeddings $\alpha$, $\beta$.  For instance, if $A$ is commutative, it is natural to restrict to $*$-homomorphisms from $A$ into $\C$ or into some commutative $\mathrm{C}^*$-algebra.  It is also somewhat natural in a free probability setting to restrict the $*$-homomorphisms to those that preserve chosen traces on $A$ and $\tilde{A}$.  On the other hand, the definition here seems to be of interest for general $\mathrm{C}^*$-algebras, so it may also be natural to use \emph{all} $*$-homomorphisms into any separable $\mathcal{A}$ (considered up to isomorphism). Hence, we will allow our definition be a fixed set $\mathcal{C}$ of triples $(\tilde{A},\alpha,\beta)$.
		
		We also want to allow an infinite generating set, and so we will add the restriction that $\sup_{i \in I} \norm{\alpha(x_i) - \beta(x_i)}$ will always be finite.  Note that if we start with an arbitrary generating set, we can always rescale each element to be in the unit ball to fulfill this condition.
		
		\begin{definition} \label{def: Lipschitz seminorm via couplings}
			Let $A$ be a $\mathrm{C}^*$-algebra generated by $x = (x_i)_{i \in I}$ such that $\norm{x_i} \leq 1$.  Fix a set $\mathcal{C}$ of triples $(\tilde{A},\alpha,\beta)$ such that $\tilde{A}$ is a $\mathrm{C}^*$-algebra and $\alpha, \beta: A \to \tilde{A}$ are $*$-homomorphisms, and assume that $\mathcal{C}$ contains some element where $\alpha \neq \beta$.  For $f \in A$, let
			\[
			L_{x,\mathcal{C}}(f) = \sup_{\substack{(\tilde{A},\alpha,\beta) \in \mathcal{C} \\ \alpha \neq \beta}} \frac{\norm{\alpha(f) - \beta(f)}_{\tilde{A}}}{\sup_{i \in I} \norm{\alpha(x_i) - \beta(x_i)}_{\tilde{A}}}.
			\]
		\end{definition}
		
		\begin{example}[Subsets of Euclidean space]
			Suppose that $A = C(K)$ for some $K \subseteq \R^d$ and let $x_i$ be the $i$th coordinate function.  Let $\cC$ be the class of $*$-homomorphisms from $A$ to $\C$, which are always given by point evaluation.  Then $L_{x,\cC}(f)$ is the Lipschitz norm of $f$ with respect to the $\ell^\infty$ metric on $\R^d$.
		\end{example}
		
		\begin{example}[Spectral triples]
			Suppose that $(A,H,D)$ is a spectral triple with representation $\pi$.  Assume that $A$ is generated by $(x_i)_{i \in I}$ for some finite index set $I$.  Assume that $[D,\pi(x_i)]$ is bounded for each $i$ and $[D,\pi(x_i)]$ is nonzero for some $i$.  For $t \in \R$, let $\alpha_t: A \to B(H)$ be the map $\alpha_t(a) = e^{itD} \pi(a) e^{-itD}$; we will consider the class $\cC = \{\alpha_t: t \in [0,t_0]\}$ for suitable $t_0 > 0$, chosen below.    We claim that
			\begin{equation} \label{eq: Dirac conjugation estimate}
				\sup_{t \geq 0} t^{-1} \norm{\alpha_t(a) - \pi(a)}_{B(H)} \leq \norm{[\pi(a),D]}_{B(H)} = \lim_{t \to 0^+} t^{-1} \norm{\alpha_t(a) - \pi(a)}_{B(H)}.
			\end{equation}
			To see this, fix a sequence of projections $P_n \leq P_{n+1}$ in $B(H)$ which converge to $1$ in strong operator topology, such that $P_n D = D P_n$ is bounded for each $n$.  Then
			\begin{multline*}
				\frac{d}{dt} P_n \alpha_t(a) P_n = \frac{d}{dt} P_n e^{itD} \pi(a) e^{-itD} P_n = \frac{d}{dt} e^{itD} P_n \pi(a) P_n e^{-itD} \\
				= e^{itD} i[D, P_n\pi(a) P_n] e^{-itD} = P_n e^{itD} [D,\pi(a)] e^{-itD} P_n,
			\end{multline*}
			where the differentiation is justified since $D$ is bounded on $P_n(H)$.  As a consequence,
			\[
			\norm{P_n(\alpha_t(a) - \pi(a))P_n}_{B(H)} \leq t \norm{P_n[D,\pi(a)]P_n}_{B(H)} \leq t \norm{[D,\pi(a)]}_{B(H)}.
			\]
			Taking $n \to \infty$ on the left-hand side shows that $\norm{\alpha_t(a) - \pi(a)}_{B(H)} \leq t \norm{[D,\pi(a)]}_{B(H)}$, hence showing the first inequality of \eqref{eq: Dirac conjugation estimate}.  This of course also implies that
			\[
			\limsup_{t \to 0^+} t^{-1} \norm{\alpha_t(a) - \pi(a)}_{B(H)} \leq \norm{[D,\pi(a)]}_{B(H)},
			\]
			and on the other hand,
			\[
			\liminf_{t \to 0^+} t^{-1} \norm{\alpha_t(a) - \pi(a)}_{B(H)} \geq \liminf_{t \to 0^+} t^{-1} \norm{P_n(\alpha_t(a) - \pi(a))P_n}_{B(H)} = \norm{P_n[D,\pi(a)]P_n}_{B(H)}.
			\]
			Taking $n \to \infty$ on the right-hand side shows that $\liminf_{t \to 0^+} t^{-1} \norm{\alpha_t(a) - \pi(a)}_{B(H)} \geq \norm{[D,\pi(a)]}_{B(H)}$, completing the proof of \eqref{eq: Dirac conjugation estimate}.  Recall we assumed that $[D,\pi(x_i)] \neq 0$ for some $i$.  Thus, using \eqref{eq: Dirac conjugation estimate}, given $\varepsilon \in (0,1)$, there is some $t_0 > 0$ such that
			\[
			(1 - \varepsilon) \max_{i \in I} \norm{[D,\pi(x_i)]}_{B(H)} \leq t^{-1} \max_{i \in I} \norm{\alpha_t(x_i) - \pi(x_i)}_{B(H)} \leq \max_{i \in I} \norm{[D,\pi(x_i)]}_{B(H)} \text{ for all } t \in [0,t_0].
			\]
			Letting $\cC = \{\alpha_t: t \in [0,t_0]\}$ and $K = \max_{i \in I} \norm{[D,\pi(x_i)]}_{B(H)}$, and using the semigroup property for $e^{itD}$, we have for $a \in A$ and $s, t \in [0,t_0]$,
			\[
			(1 - \varepsilon) K |s - t| \leq \max_{i \in I} \norm{\alpha_s(x_i) - \alpha_t(x_i)}_{B(H)} \leq K |s - t|.
			\]
			We therefore have that
			\[
			\frac{\norm{\alpha_s(a) - \alpha_t(a)}_{B(H)}}{(1 - \varepsilon)K|s-t|} \leq \frac{\norm{\alpha_s(a) - \alpha_t(a)}_{B(H)}}{\max_{i \in I} \norm{\alpha_s(x_i) - \alpha_t(x_i)}_{B(H)}} \leq \frac{\norm{\alpha_s(a) - \alpha_t(a)}_{B(H)}}{K|s-t|}.
			\]
			Taking the supremum over $s, t \in [0,t_0]$, and applying \eqref{eq: Dirac conjugation estimate} and the semigroup property, we get
			\[
			\frac{1}{(1-\varepsilon)K} \norm{[D,\pi(a)]}_{B(H)} \leq L_{x,\cC}(a) \leq \frac{1}{K} \norm{[D,\pi(a)]}_{B(H)}.
			\]
			Thus, our construction recovers the spectral triple Lip-norm up to constants which can be taken arbitrarily close to $1/K$.
		\end{example}
		
		\begin{example}[Trace-preserving $*$-homomomorphisms] \label{ex: trace preserving}
			Let $A$ be a $\mathrm{C}^*$-algebra with a faithful trace $\tau$.   Let $\mathcal{C}_{\tr} = \mathcal{C}_{\tr}(A,\tau)$ be the class of $(\tilde{A},\alpha,\beta)$ where $\tilde{A}$ is a $\mathrm{C}^*$-algebra with a faithful trace $\tilde{\tau}$ and $\alpha, \beta: A \to \tilde{A}$ are unital $*$-homomorphisms with $\tilde{\tau} \circ \alpha = \tau$ and $\tilde{\tau} \circ \beta = \tau$.  We remark that these pairs are quite similar to the non-commutative couplings used by Biane and Voiculescu to define the free Wasserstein distance in \cite{BV2001}.
		\end{example}
		
		
		\begin{proposition}[Basic properties of $L_{x,\mathcal{C}}$] \label{prop: Lipschitz seminorm basic properties}
			Consider the setup of Definition \ref{def: Lipschitz seminorm via couplings}.  Let $f, g \in A$ and let $\varphi$ be a state on $A$.
			\begin{enumerate}[(1)]
				\item $\norm{f - \varphi(f)}_A \leq 2 L_{x,\mathcal{C}}(f) \sup_i \norm{x_i}_A$, provided that $\mathcal{C}$ contains $(A \otimes A, \iota_1,\iota_2)$, where $\otimes$ is the $\mathrm{C}^*$-minimal tensor product and $\iota_1(a) = a \otimes 1$ and $\iota_2(a) = 1 \otimes a$.  In particular, in this case, the kernel of $L_{x,\mathcal{C}}$ is $\mathbb{C} 1$. 
				\item $L_{x,\mathcal{C}}$ satisfies the Leibniz inequality $L_{x,\mathcal{C}}(fg) \leq L_{x,\mathcal{C}}(f) \norm{g}_A + \norm{f}_A L_{x,\mathcal{C}}(f)$.
				\item $L_{x,\mathcal{C}}(x_i) \leq 1$.
				\item The domain $\dom(L_{x,\mathcal{C}}) = \{f \in A: L_{x,\mathcal{C}}(f) < \infty\}$
				contains the $*$-algebra generated by $(x_i)_{i \in I}$, and in particular is norm-dense in $A$.
				\item $L_{x,\mathcal{C}}$ is lower semi-continuous on $A$.
			\end{enumerate}
		\end{proposition}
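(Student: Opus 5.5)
We prove the five items in turn. Each reduces to an elementary estimate for a single triple $(\tilde{A},\alpha,\beta) \in \mathcal{C}$ with $\alpha \neq \beta$, followed by a supremum over triples. Throughout we abbreviate $\delta(\alpha,\beta) := \sup_i \norm{\alpha(x_i)-\beta(x_i)}_{\tilde{A}}$. This quantity is finite because $\norm{x_i} \leq 1$ gives $\delta \leq 2$. It is also strictly positive when $\alpha \neq \beta$: otherwise $\alpha$ and $\beta$ would agree on the generators, hence on the $*$-algebra they generate, hence on $A$ by continuity.

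\textbf{Items (2) and (3).} For the Leibniz inequality (2) (whose right-hand side should read $L(f)\norm{g} + \norm{f}L(g)$), write
\[
\alpha(fg)-\beta(fg) = (\alpha(f)-\beta(f))\alpha(g) + \beta(f)(\alpha(g)-\beta(g)).
\]
Since $*$-homomorphisms are contractive, this gives $\norm{\alpha(fg)-\beta(fg)} \leq \norm{\alpha(f)-\beta(f)}\norm{g} + \norm{f}\norm{\alpha(g)-\beta(g)}$. Dividing by $\delta(\alpha,\beta)$ and taking the supremum yields (2). Item (3) is immediate, since $\norm{\alpha(x_i)-\beta(x_i)} \leq \delta(\alpha,\beta)$.

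\textbf{Item (4).} Linearity of $\alpha - \beta$ gives $L(f+g) \leq L(f)+L(g)$ and $L(cf) = |c|L(f)$. Unitality (or simply $\alpha(1)=\beta(1)$ when $\alpha,\beta$ are unital, as in $\mathcal{C}_{\tr}$) gives $L(1) = 0$. The identity $\norm{\alpha(f^*)-\beta(f^*)} = \norm{\alpha(f)-\beta(f)}$ gives $L(f^*) = L(f)$. Combined with (2) and (3), induction on monomial length shows that every $*$-polynomial in the $x_i$ has finite $L$. This $*$-algebra is dense in $A$ because the $x_i$ generate $A$.

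\textbf{Item (5).} For fixed $(\tilde{A},\alpha,\beta)$, the map $f \mapsto \norm{\alpha(f)-\beta(f)}/\delta(\alpha,\beta)$ is norm-continuous. $L$ is a pointwise supremum of continuous functions, hence lower semicontinuous.

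\textbf{Item (1).} Take the triple $(A \otimes A, \iota_1, \iota_2)$. Then
\[
\delta(\iota_1,\iota_2) = \sup_i \norm{x_i \otimes 1 - 1 \otimes x_i} \leq 2\sup_i \norm{x_i},
\]
so $\norm{f \otimes 1 - 1 \otimes f} \leq 2L(f)\sup_i\norm{x_i}$. Now apply the slice map $\id \otimes \varphi : A \otimes_{\min} A \to A$, which is a contractive (indeed completely positive unital) map. It sends $f\otimes 1 - 1\otimes f$ to $f - \varphi(f)1$, which gives the bound.

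For the kernel statement, note first that $L(c1) = 0$. Conversely, suppose $L(f) = 0$. The bound just proved gives $f = \varphi(f)1$, so $f \in \C 1$. If $\iota_1 = \iota_2$, that triple is excluded from the supremum. But then $a \otimes 1 = 1 \otimes a$ for all $a$, which forces $A = \C$, and the claim is trivial.

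The only step with any real content is the slice-map argument in (1). The point to handle carefully there is that $\id \otimes \varphi$ extends contractively to the minimal tensor product; this is standard for states, being a completely positive map.
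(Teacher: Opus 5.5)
Your proposal is correct and follows essentially the same argument as the paper: the slice map $\id\otimes\varphi$ applied to $\iota_1(f)-\iota_2(f)$ for (1), the splitting $(\alpha(f)-\beta(f))\alpha(g)+\beta(f)(\alpha(g)-\beta(g))$ for (2), the bound $\norm{\alpha(x_i)-\beta(x_i)}\le\sup_j\norm{\alpha(x_j)-\beta(x_j)}$ for (3), and a supremum of continuous functions for (5). Your additions only fill in details the paper leaves implicit: fixing the typo in the Leibniz inequality, recording $L(1)=0$, subadditivity and $*$-invariance for (4), and handling the degenerate case $\iota_1=\iota_2$. One small caveat: your claim that the denominator is positive whenever $\alpha\neq\beta$ tacitly uses that $\alpha,\beta$ are unital, as they are in $\mathcal{C}_{\tr}$.
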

		
		\begin{proof}
			(1) Let $(A \otimes A,\iota_1,\iota_2)$ be as above.  Note that $\id \otimes \varphi$ defines a unital completely positive map $A \otimes A \to A$.  Then
			\begin{align*}
				\norm{f - \varphi(f)}_{A} &= \norm{(\id \otimes \varphi)(\iota_1(f) - \iota_2(f))}_A \\
				&\leq \norm{\iota_1(f) - \iota_2(f)}_{A \otimes A} \\
				&\leq L_{x,\mathcal{C}}(f) \sup_{i \in I} \norm{\iota_1(x_i) - \iota_2(x_i)}_{A \otimes A} \\
				&\leq L_{x,\mathcal{C}}(f) \cdot 2 \sup_{i \in I} \norm{x_i}_A
			\end{align*}
			
			(2) Note that for any $\tilde{A}$ and $*$-homomorphisms $\alpha, \beta: A \to \tilde{A}$, we have
			\[
			\norm{\alpha(fg) - \beta(fg)}_{\tilde{A}} = \norm{(\alpha(f) - \beta(f))\alpha(g) + \beta(f)(\alpha(g) - \beta(g))}_{\tilde{A}} \leq \norm{\alpha(f) - \beta(f)}_{\tilde{A}} \norm{g}_A + \norm{f}_A \norm{\alpha(g) - \beta(g)}_{\tilde{A}}.
			\]
			Dividing by $\sup_{i \in I} \norm{\alpha(x_i) - \beta(x_i)}_{\tilde{A}}$ implies the desired bound.
			
			(3) This is immediate because $\norm{\alpha(x_j) - \beta(x_j)}_{\tilde{A}} \leq \sup_{i \in I} \norm{\alpha(x_i) - \beta(x_i)}_{\tilde{A}}$.
			
			(4) This follows from (2) and (3).
			
			(5) For each fixed $(\tilde{A},\alpha,\beta) \in \mathcal{C}$ with $\alpha \neq \beta$,
			\[
			f \mapsto \frac{\norm{\alpha(f) - \beta(f)}_{\tilde{A}}}{\max_j \norm{\alpha(x_j) - \beta(x_j)}}_{\tilde{A}}
			\]
			is continuous on $A$.  Thus, $L_{x,\mathcal{C}}$ is a supremum of a family of continuous functions and hence is lower semi-continuous.
		\end{proof}
		
		\begin{remark} \label{rem: trace preserving finite diameter}
			If we assume that $A$ is equipped with a trace $\tau$ and consider the trace-preserving couplings as in \ref{ex: trace preserving}, then this will include the tensor inclusions $\iota_1$ and $\iota_2$ of $(A,\tau)$ into $(A \otimes A, \tau \otimes \tau)$, and so Proposition \ref{prop: Lipschitz seminorm basic properties} (1) applies in this case.
		\end{remark}
		
		\begin{proposition}[Monge--Kantorovich metric and duality] \label{prop: MK duality}
			Consider the setup of Definition \ref{def: Lipschitz seminorm via couplings} and assume that $\mathcal{C}$ contains $(A \otimes A,\iota_1,\iota_2)$. For $\varphi$, $\psi \in \mathcal{S}(A)$, let
			\[
			d_{x,\mathcal{C}}(\varphi,\psi) = \sup \{ |\varphi(f) - \psi(f)|: f \in A_{\sa}, L_{x,\mathcal{C}}(f) \leq 1\}.
			\]
			Then $d_{x,w,\mathcal{C}}$ is a metric on $\mathcal{S}(A)$.  for $f \in A_{\sa}$, we have the noncommutative Monge--Kantorovich duality
			\begin{equation} \label{eq: MK duality}
				L_{x,\mathcal{C}}(f) = \sup \left\{\frac{|\varphi(f) - \psi(f)|}{d_{x,\mathcal{C}}(\varphi,\psi)}: \varphi, \psi \in \mathcal{S}(A), \varphi \neq \psi \right\}.
			\end{equation}
		\end{proposition}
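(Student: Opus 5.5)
We prove the two claims in order: first that $d_{x,\mathcal{C}}$ is a metric, then the duality formula \eqref{eq: MK duality}, whose nontrivial direction rests on a Hahn--Banach argument.

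\textbf{Step 1: $d_{x,\mathcal{C}}$ is a metric.}
\begin{itemize}
\item \emph{Finiteness.} For $f \in A_{\sa}$ with $L_{x,\mathcal{C}}(f) \leq 1$, Proposition \ref{prop: Lipschitz seminorm basic properties} (1) gives $\norm{f - \varphi(f)}_A \leq 2$, using $\norm{x_i} \leq 1$. Hence
\[
|\varphi(f) - \psi(f)| = |\psi(f - \varphi(f))| \leq 2,
\]
so $d_{x,\mathcal{C}} \leq 2$.
\item \emph{Symmetry and triangle inequality.} Both are immediate, since $d_{x,\mathcal{C}}$ is a supremum of the pseudometrics $|\varphi(f) - \psi(f)|$.
\item \emph{Separation.} Suppose $\varphi \neq \psi$. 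Because states are determined by their values on $A_{\sa}$, there is $g \in A_{\sa}$ with $\varphi(g) \neq \psi(g)$. By Proposition \ref{prop: Lipschitz seminorm basic properties} (4), $\dom(L_{x,\mathcal{C}})$ is dense, and it is $*$-invariant because $\alpha(f^*) - \beta(f^*) = (\alpha(f) - \beta(f))^*$. So self-adjoint elements of finite Lip-norm are dense in $A_{\sa}$, and we may take $g$ of this form. Rescaling $g$ so that $L_{x,\mathcal{C}}(g) \leq 1$ then shows $d_{x,\mathcal{C}}(\varphi,\psi) > 0$.
\end{itemize}

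\textbf{Step 2: the easy inequality in \eqref{eq: MK duality}.} The right-hand side is at most $L_{x,\mathcal{C}}(f)$. If $L_{x,\mathcal{C}}(f) = 0$, then $f$ is a scalar by Proposition \ref{prop: Lipschitz seminorm basic properties} (1), and both sides vanish. If $L_{x,\mathcal{C}}(f) = \infty$, there is nothing to prove. Otherwise $f / L_{x,\mathcal{C}}(f)$ is admissible in the definition of $d_{x,\mathcal{C}}$, which gives
\[
|\varphi(f) - \psi(f)| \leq L_{x,\mathcal{C}}(f)\, d_{x,\mathcal{C}}(\varphi,\psi).
\]

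\textbf{Step 3: the reverse inequality (main obstacle).} Denote the right-hand side of \eqref{eq: MK duality} by $L'(f)$. We must show $L_{x,\mathcal{C}}(f) \leq L'(f)$, and we argue by contradiction. Rescale so that $L'(f) < 1 < L_{x,\mathcal{C}}(f)$; this also covers the case $L_{x,\mathcal{C}}(f) = \infty$.

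Work in the real Banach space $A_{\sa}/\R 1$ with the quotient norm. Let
\[
B = \{g \in A_{\sa} : L_{x,\mathcal{C}}(g) \leq 1\}.
\]
Since $L_{x,\mathcal{C}}$ is a seminorm that vanishes on $\R 1$, $B$ is convex, balanced, and invariant under adding scalars. Its image in the quotient is closed, by lower semicontinuity (Proposition \ref{prop: Lipschitz seminorm basic properties} (5)) together with compactness of the added scalar component. The class of $f$ does not lie in this image, because $L_{x,\mathcal{C}}(f) > 1$.

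Hahn--Banach separation then yields a bounded real functional $\theta$ on $A_{\sa}$ with $\theta(1) = 0$, $\theta(f) > 1$, and $|\theta(g)| \leq 1$ for all $g \in B$. Next, Jordan-decompose $\theta$ as a hermitian functional: $\theta = \lambda(\varphi - \psi)$ with $\varphi, \psi$ states and $\lambda \geq 0$. The equal masses come from $\theta(1) = 0$.

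Now $\lambda > 0$ since $\theta(f) \neq 0$, so $\varphi \neq \psi$. The bound $|\theta| \leq 1$ on $B$ says $\lambda\, d_{x,\mathcal{C}}(\varphi,\psi) \leq 1$. Therefore
\[
\frac{|\varphi(f) - \psi(f)|}{d_{x,\mathcal{C}}(\varphi,\psi)} \geq \frac{\theta(f)}{\lambda\, d_{x,\mathcal{C}}(\varphi,\psi)} \geq \theta(f) > 1 > L'(f).
\]
This contradicts the definition of $L'(f)$ as a supremum, so $L_{x,\mathcal{C}}(f) \leq L'(f)$.

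The delicate point is the closedness of $B$ in the quotient, which is exactly what the separation needs. If the quotient route proves awkward, we can instead separate directly in $A_{\sa}$ using the closed convex set $B$; closedness there follows from lower semicontinuity. Applying $\theta(1)$-invariance, $\theta$ must then vanish on $1$, because $B + \R 1 = B$ and $\theta$ is bounded on $B$.
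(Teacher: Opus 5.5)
Your proof is correct, but for the main inequality $L_{x,\mathcal{C}}(f)\le\sup\{\cdots\}$ it takes a different route from the paper.

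\textbf{The paper's route.} It argues directly from the coupling definition. It picks $(\tilde A,\alpha,\beta)\in\mathcal{C}$ whose ratio exceeds $(1-\varepsilon)L_{x,\mathcal{C}}(f)$. It then takes a state $\theta$ on $\tilde A$ with $|\theta(\alpha(f)-\beta(f))|=\norm{\alpha(f)-\beta(f)}$, which is possible because this element is self-adjoint. Setting $\varphi=\theta\circ\alpha$ and $\psi=\theta\circ\beta$, one gets $|\varphi(f)-\psi(f)|=\norm{\alpha(f)-\beta(f)}$ while $d_{x,\mathcal{C}}(\varphi,\psi)\le\sup_i\norm{\alpha(x_i)-\beta(x_i)}$, and the bound follows.

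\textbf{Your route.} Yours is the abstract Hahn--Banach plus Jordan decomposition argument. It is essentially Rieffel's \cite[Theorem 4.1]{Rieffel2}, which the paper mentions in the preceding remark as an alternative.

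\textbf{What each buys.} Your argument uses only that $L_{x,\mathcal{C}}$ is a lower semicontinuous seminorm with kernel $\C1$, so it works for any such Lip-norm. It does, however, need Proposition~\ref{prop: Lipschitz seminorm basic properties}~(5) and a separation theorem. The paper's proof needs neither. It also gives extra information: near-optimal pairs of states can be taken of the form $(\theta\circ\alpha,\theta\circ\beta)$ coming from a single coupling. Moreover, every coupling yields the primal-type bound $d_{x,\mathcal{C}}(\theta\circ\alpha,\theta\circ\beta)\le\sup_i\norm{\alpha(x_i)-\beta(x_i)}$, the analogue of bounding a Wasserstein distance by a transport plan.

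\textbf{A minor point.} Closedness of the image of $B$ in $A_{\sa}/\R1$ needs no compactness. It holds simply because $B$ is closed and saturated ($B+\R1=B$). If $g_n+\R1\to h+\R1$ with $g_n\in B$, choose $r_n\in\R$ with $g_n+r_n1\to h$. Since $g_n+r_n1\in B$, closedness gives $h\in B$. As you note, separating directly in $A_{\sa}$ also works.
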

		
		\begin{remark}
			The duality \eqref{eq: MK duality} follows from \cite[Theorem 4.1]{Rieffel2} because of the lower-semicontinuity of $L_{x,\mathcal{C}}$ proved in Proposition \ref{prop: Lipschitz seminorm basic properties} (5), since $\ker(L_{x,\mathcal{C}}) = \C 1$ by Proposition \ref{prop: Lipschitz seminorm basic properties} (1).  However, we give an additional proof of \eqref{eq: MK duality} below directly from the definition of $L_{x,\mathcal{C}}$.
		\end{remark}
		
		\begin{proof}[Proof of Proposition \ref{prop: MK duality}]
			In light of Proposition \ref{prop: Lipschitz seminorm basic properties} (1), $d_{x,\mathcal{C}}(\varphi,\psi)$ is finite.  By Proposition \ref{prop: Lipschitz seminorm basic properties} (4), $\dom(L_{x,\mathcal{C}})$ is dense in $A$, which implies that if $\varphi \neq \psi$, then $d_{x,w}(\varphi,\psi) > 0$.  Nonnegativity, symmetry, and the triangle inequality are clear from the construction of $d_{x,\mathcal{C}}$.
			
			It remains to prove the Monge--Kantorovich duality.  The inequality ($\geq$) is immediate.  To prove the opposite inequality, assume without loss of generality that $L_{x,\mathcal{C}}(f) \neq 0$, and let $\varepsilon > 0$, and let $(\tilde{A},\alpha,\beta) \in \mathcal{C}$ such that
			\[
			\frac{\norm{\alpha(f) - \beta(f)}_{\tilde{A}}}{\sup_{i \in I} \norm{\alpha(x_i) - \beta(x_i)}_{\tilde{A}}} > (1 - \varepsilon) L_{x,\mathcal{C}}(f).
			\]
			Since $\alpha(f) - \beta(f)$ is self-adjoint, it generates a commutative $\mathrm{C}^*$-algebra $B$, and there is a state $\theta_0$ on $B$ with $|\theta_0(\alpha(f) - \beta(f))| = \norm{\alpha(f) - \beta(f)}$.  By the Hahn--Banach theorem, $\theta_0$ extends to a state $\theta$ on $\tilde{A}$.  Consider the states $\varphi = \theta \circ \alpha$ and $\psi = \theta \circ \beta$ on $A$.  By construction $\varphi - \psi = \theta \circ (\alpha - \beta)$, and hence for $g \in A$,
			\[
			|\varphi(g) - \psi(g)| \leq \norm{\theta} \norm{\alpha(g) - \beta(g)}_{\tilde{A}} \leq L_{x,\mathcal{C}}(g) \sup_{i \in I} \norm{\alpha(x_i) - \beta(x_i)}_{{\tilde{A}}}.
			\]
			Therefore,
			\[
			d_{x,\mathcal{C}}(\varphi,\psi) \leq \sup_{i \in I} \norm{\alpha(x_i) - \beta(x_i)}_{\tilde{A}}.
			\]
			Moreover,
			\begin{align*}
				|\varphi(f) - \psi(f)| &= |\theta(\alpha(f) - \beta(f))| \\
				&= \norm{\alpha(f) - \beta(f)}_{\tilde{A}} \\
				&\geq (1 - \varepsilon) L_{x,\mathcal{C}}(f) \sup_{i \in I} \norm{\alpha(x_i) - \beta(x_i)}_{\tilde{A}} \\
				&\geq (1 - \varepsilon) L_{x,\mathcal{C}}(f) d_{x,\mathcal{C}}(\varphi,\psi).
			\end{align*}
			Therefore,
			\[
			\sup \left\{\frac{|\varphi(f) - \psi(f)|}{d_{x,\mathcal{C}}(\varphi,\psi)}: \varphi, \psi \in \mathcal{S}(A), \varphi \neq \psi \right\} \geq (1 - \varepsilon) L_{x,\mathcal{C}}(f),
			\]
			and since $\varepsilon$ was arbitrary, the proof is complete.
		\end{proof}
		
		Since the Lipschitz norms defined here depend on the choice of generators, it is natural to consider what happens if we change the generating set.
		
		\begin{lemma}[Change of coordinates] \label{lem: change of coordinates}
			Let $A$ be a $\mathrm{C}^*$-algebra and consider two generating sets $x = (x_i)_{i \in I}$ and $y = (y_j)_{j \in J}$ in the unit ball. Let $\mathcal{C}$ be a class of triples $(\tilde{A},\alpha,\beta)$ as in Definition \ref{def: Lipschitz seminorm via couplings}.  Let $R \in (0,\infty)$.  Then the following are equivalent:
			\begin{enumerate}[(1)]
				\item $\sup_{j \in J} L_{x,\mathcal{C}}(y_j) \leq R$.
				\item $L_{x,\mathcal{C}}(f) \leq R \, L_{y,\mathcal{C}}(f)$ for all $f \in A$.
				\item $d_{y,\mathcal{C}}(\varphi,\psi) \leq R \, d_{x,\mathcal{C}}(\varphi,\psi)$ for $\varphi$, $\psi \in \mathcal{S}(A)$.
			\end{enumerate}
		\end{lemma}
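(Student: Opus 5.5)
I will prove the cycle (1) $\Rightarrow$ (2) $\Rightarrow$ (3) $\Rightarrow$ (1).

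\emph{(1) $\Rightarrow$ (2).} Fix $(\tilde{A},\alpha,\beta) \in \mathcal{C}$ with $\alpha \neq \beta$. Since $x$ generates $A$ and $\alpha \neq \beta$, some $x_i$ satisfies $\alpha(x_i) \neq \beta(x_i)$, and likewise some $y_j$ satisfies $\alpha(y_j) \neq \beta(y_j)$. Hence both denominators are nonzero. By (1), for every $j$,
\[
\norm{\alpha(y_j) - \beta(y_j)} \leq R \sup_i \norm{\alpha(x_i) - \beta(x_i)}.
\]
By definition of $L_{y,\mathcal{C}}$,
\[
\norm{\alpha(f) - \beta(f)} \leq L_{y,\mathcal{C}}(f) \sup_j \norm{\alpha(y_j) - \beta(y_j)}.
\]
Combining these gives $\norm{\alpha(f)-\beta(f)} \leq R\, L_{y,\mathcal{C}}(f) \sup_i \norm{\alpha(x_i)-\beta(x_i)}$. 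Dividing and taking the supremum over $\mathcal{C}$ yields (2). If $L_{y,\mathcal{C}}(f)=\infty$ the inequality is trivial.

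\emph{(2) $\Rightarrow$ (3).} By (2), $L_{y,\mathcal{C}}(f) \leq 1$ implies $L_{x,\mathcal{C}}(f) \leq R$. So the set defining $d_{y,\mathcal{C}}$ is contained in $R$ times the set defining $d_{x,\mathcal{C}}$, after rescaling $f$ by $1/R$. Therefore $d_{y,\mathcal{C}}(\varphi,\psi) \leq R\, d_{x,\mathcal{C}}(\varphi,\psi)$.

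\emph{(3) $\Rightarrow$ (1).} I use the Monge--Kantorovich duality of Proposition \ref{prop: MK duality}, applied with respect to $x$:
\[
L_{x,\mathcal{C}}(y_j) = \sup_{\varphi \neq \psi} \frac{|\varphi(y_j) - \psi(y_j)|}{d_{x,\mathcal{C}}(\varphi,\psi)}.
\]
By Proposition \ref{prop: Lipschitz seminorm basic properties} (3) applied to $y$, we have $L_{y,\mathcal{C}}(y_j) \leq 1$. Hence $|\varphi(y_j)-\psi(y_j)| \leq d_{y,\mathcal{C}}(\varphi,\psi) \leq R\, d_{x,\mathcal{C}}(\varphi,\psi)$, and taking the supremum gives $L_{x,\mathcal{C}}(y_j) \leq R$.

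This step is the main obstacle, because the duality is stated only for self-adjoint $f$, and it needs $\mathcal{C}$ to contain $(A\otimes A,\iota_1,\iota_2)$ so that $d_{x,\mathcal{C}}$ is a genuine finite metric. The self-adjointness is harmless if the generators are self-adjoint, as in the intended applications. Otherwise I would reduce to self-adjoint elements by writing $y_j$ in real and imaginary parts. Alternatively, I would rerun the direct argument from the proof of Proposition \ref{prop: MK duality}: given $(\tilde{A},\alpha,\beta)$, choose a state $\theta$ norming $\alpha(y_j)-\beta(y_j)$, set $\varphi=\theta\circ\alpha$ and $\psi=\theta\circ\beta$, and bound $d_{x,\mathcal{C}}(\varphi,\psi) \leq \sup_i\norm{\alpha(x_i)-\beta(x_i)}$ exactly as there. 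For a non-self-adjoint difference, a norming state only achieves $\norm{\cdot}$ up to a factor, so the constant $R$ might degrade. I therefore expect the clean equivalence to need the self-adjointness and tensor-coupling hypotheses implicit in the duality statement, and I would state them explicitly if needed.
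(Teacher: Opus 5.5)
Your proof is correct and follows the paper's argument essentially verbatim: the same cycle (1)$\Rightarrow$(2)$\Rightarrow$(3)$\Rightarrow$(1), with (3)$\Rightarrow$(1) obtained from the Monge--Kantorovich duality of Proposition \ref{prop: MK duality} together with $L_{y,\mathcal{C}}(y_j) \leq 1$. Your caveat is fair, because the paper's appeal to that proposition tacitly assumes that the $y_j$ are self-adjoint and that $\mathcal{C}$ contains $(A\otimes A,\iota_1,\iota_2)$ (both hold in its applications). In fact your direct state argument does not need the tensor coupling, since it gives $\norm{\alpha(y_j)-\beta(y_j)} = |\varphi(y_j)-\psi(y_j)| \leq d_{y,\mathcal{C}}(\varphi,\psi) \leq R\, d_{x,\mathcal{C}}(\varphi,\psi) \leq R \sup_i \norm{\alpha(x_i)-\beta(x_i)}$ with no division by $d_{x,\mathcal{C}}$.
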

		
		\begin{proof}
			Assume (1).  If $(\tilde{A},\alpha,\beta) \in \mathcal{C}$ and $f \in A$, then
			\begin{align*}
				\norm{\alpha(f) - \beta(f)}_{\tilde{A}} &\leq L_{y,\mathcal{C}}(f) \sup_{j \in J} \norm{\alpha(y_j) - \beta(y_j)} \\
				&\leq L_{y,\mathcal{C}}(f) \sup_{j \in J} L_{x,\mathcal{C}}(y_j) \sup_{i \in I} \norm{\alpha(x_i) - \beta(x_i)} \\
				&\leq R L_{y,\mathcal{C}}(f),
			\end{align*}
			hence (2) holds.
			
			Next, assume (2).  If $\varphi, \psi \in \mathcal{S}(A)$, then
			\[
			\sup_{f: L_{y,\mathcal{C}}(f) \leq 1} |\varphi(f) - \psi(f)| \leq \sup_{f: L_{x,\mathcal{C}}(f) \leq R} |\varphi(f) - \psi(f)| = R \sup_{f: L_{x,\mathcal{C}}(f) \leq 1} |\varphi(f) - \psi(f)|,
			\]
			hence (3) holds.
			
			Finally, assume (3).  By Proposition \ref{prop: MK duality},
			\begin{align*}
				L_{x,\mathcal{C}}(y_j) &= \sup \left\{\frac{|\varphi(y_j) - \psi(y_j)|}{d_{x,\mathcal{C}}(\varphi,\psi)}: \varphi, \psi \in \mathcal{S}(A), \varphi \neq \psi \right\} \\
				&\leq R \sup \left\{\frac{|\varphi(y_j) - \psi(y_j)|}{d_{y,\mathcal{C}}(\varphi,\psi)}: \varphi, \psi \in \mathcal{S}(A), \varphi \neq \psi \right\} \\
				&= R L_{y,\mathcal{C}}(y_j) \\
				&\leq R,
			\end{align*}
			where the last inequality follows from Proposition \ref{prop: Lipschitz seminorm basic properties}(3), hence (1) holds.
		\end{proof}
		
		\begin{corollary}
			In the situation of the previous lemma, $L_{x,\mathcal{C}}$ and $L_{y,\mathcal{C}}$ are equivalent up to constants (and $d_{y,\mathcal{C}}$ and $d_{x,\mathcal{C}}$ are equivalent up to constants) if and only if $\sup_{j \in J} L_{x,\mathcal{C}}(y_j) < \infty$ and $\sup_{i \in I} L_{y,\mathcal{C}}(x_i) < \infty$.
			
			Using Proposition \ref{prop: Lipschitz seminorm basic properties} (4), this always holds when $I$ and $J$ are finite sets and $(x_i)_{i \in I}$ and $(y_j)_{j \in J}$ generate the same $*$-algebra.
		\end{corollary}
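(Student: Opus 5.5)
The first assertion follows directly from Lemma \ref{lem: change of coordinates} applied in both directions. Suppose $R_1 := \sup_{j \in J} L_{x,\mathcal{C}}(y_j) < \infty$ and $R_2 := \sup_{i \in I} L_{y,\mathcal{C}}(x_i) < \infty$. The implication (1)$\Rightarrow$(2) with $R = R_1$ gives $L_{x,\mathcal{C}} \leq R_1 L_{y,\mathcal{C}}$. The same implication with the roles of $x$ and $y$ swapped and $R = R_2$ gives $L_{y,\mathcal{C}} \leq R_2 L_{x,\mathcal{C}}$. So the two Lip-norms are equivalent up to constants. The implication (1)$\Rightarrow$(3) gives the matching inequalities $d_{y,\mathcal{C}} \leq R_1 d_{x,\mathcal{C}}$ and $d_{x,\mathcal{C}} \leq R_2 d_{y,\mathcal{C}}$, so the metrics are equivalent as well.

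For the converse, suppose $L_{x,\mathcal{C}} \leq R L_{y,\mathcal{C}}$ for some $R < \infty$. Then (2)$\Rightarrow$(1) of the lemma gives $\sup_j L_{x,\mathcal{C}}(y_j) \leq R$. Alternatively, one can evaluate at $f = y_j$ and use $L_{y,\mathcal{C}}(y_j) \leq 1$ from Proposition \ref{prop: Lipschitz seminorm basic properties} (3). The symmetric inequality bounds $\sup_i L_{y,\mathcal{C}}(x_i)$. If instead we assume equivalence of the metrics, (3)$\Rightarrow$(1) gives the same bounds.

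One technical point needs care: the (3)$\Rightarrow$(1) direction relies on Proposition \ref{prop: MK duality}, which assumes $(A\otimes A,\iota_1,\iota_2) \in \mathcal{C}$. I would either state this hypothesis explicitly for the metric version or note that it is already implicit in the lemma's proof.

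For the second assertion, suppose $I$ and $J$ are finite and the two tuples generate the same $*$-algebra. Then each $y_j$ is a noncommutative polynomial in $x$. By Proposition \ref{prop: Lipschitz seminorm basic properties} (4), $L_{x,\mathcal{C}}(y_j) < \infty$ for every $j$. Since $J$ is finite, the supremum over $j$ is a finite maximum and hence finite. The symmetric argument handles $\sup_i L_{y,\mathcal{C}}(x_i)$, and the first part then applies.

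I expect no real obstacle here. The only things to check are that the finiteness of the index sets is what turns finitely many finite values into a finite supremum, and the hypothesis on $\mathcal{C}$ noted above for the metric statement.
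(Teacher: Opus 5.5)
Your proposal is correct and takes the same approach the paper intends (the paper gives no separate proof): apply Lemma \ref{lem: change of coordinates} in both directions to get the equivalence, and use Proposition \ref{prop: Lipschitz seminorm basic properties} (4) together with finiteness of $I$ and $J$ for the second assertion. Your caveat is fair: the metric version of the converse goes through (3)$\Rightarrow$(1), which uses Proposition \ref{prop: MK duality} and hence the hypothesis $(A \otimes A,\iota_1,\iota_2) \in \mathcal{C}$. Your direct evaluation at $f = y_j$ avoids that hypothesis for the Lip-norm version.
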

		
		\subsection{Total boundness via semigroup regularization} \label{subsec: total boundedness}
		
		For $L_{x,w}$ to define a \emph{bona fide} compact quantum metric space, it remains to show that $\{f \in \mathrm{C}^*(x_i: i \in I): L_{x,\cC}(f) \leq 1, \tau(f) = 0\}$ is totally bounded in operator norm.  We do not expect this to be true in all cases.  The next proposition gives sufficient conditions in terms of an ultracontractive quantum Markov semigroup; it is designed to be applied to the $q$-Gaussians, but we state the hypotheses in a general form.
		
		\begin{proposition}
			\label{prop: total boundedness via semigroup}
			Let $(M,\tau)$ be a tracial von Neumann algebra generated by $(x_i)_{i \in I}$, and let $A = \mathrm{C}^*(x_i: i \in I)$, and assume that $R := \sup_i \norm{x_i} < \infty$.  Suppose that there is a quantum Markov semigroup $(\Phi_t)_{t \in [0,\infty)}$ satisfying the following properties:
			\begin{enumerate}[(1)] 
				\item \emph{Compactness:} $\Phi_t$ defines a compact operator on $L^2(M,\tau)$ for each $t > 0$.
				\item \emph{Ultracontractivity:} $\Phi_t$ gives a bounded operator $L^2(M,\tau) \to M$ for each $t > 0$.
				\item \emph{Continuous dilation:} There exists a tracial von Neumann algebra $(N,\tau_N)$, an embedding $\iota: (M,\tau_M) \to (N,\tau_N)$, and one-parameter group of automorphisms $\alpha_t$ such that
				\begin{itemize}
					\item $\Phi_t = \iota^* \alpha_t \iota$, where $\iota^*$ is the conditional expectation $N \to M$ adjoint to $\iota$.
					\item For each $t > 0$, the element $(N,\iota|_A,\alpha_t \circ \iota|_A)$ is in the class $\cC$ used to define the Lipschitz norm.
					\item $\lim_{t \to 0} \sup_i \norm{\alpha_t \circ \iota(x_i) - \iota(x_i)} = 0$.
				\end{itemize}
			\end{enumerate}
			Then $\{f \in M: L_{x,\cC}(f) \leq 1, \tau(f) = 0 \}$ is totally bounded in operator norm.  In particular, $(A,L_{x,\cC})$ is a Leibniz compact quantum metric space.
		\end{proposition}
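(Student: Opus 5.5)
Let $B = \{f \in M: L_{x,\cC}(f) \leq 1, \tau(f) = 0\}$. The plan is an $\varepsilon$-net argument by regularization: approximate each $f \in B$ uniformly by $\Phi_t(f)$, and show that $\Phi_t(B)$ is totally bounded in operator norm. (Here $L_{x,\cC}(f)$ for $f\in M$ is understood via the dilation, i.e.\ $f$ in the relevant domain; in practice we only need $f\in A$.)

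\textbf{Step 1: uniform approximation.} Fix $f \in B$ and $t > 0$. Since $\iota^*$ is a conditional expectation, hence contractive, and $\iota^*\iota(f) = f$,
\[
\norm{f - \Phi_t(f)} = \norm{\iota^*(\iota(f) - \alpha_t\iota(f))} \leq \norm{\iota(f) - \alpha_t \circ \iota(f)}_N \leq L_{x,\cC}(f)\, \delta(t) \leq \delta(t),
\]
where $\delta(t) := \sup_i \norm{\alpha_t\circ\iota(x_i) - \iota(x_i)}$. The middle inequality uses that $(N,\iota|_A,\alpha_t\circ\iota|_A) \in \cC$ and the definition of $L_{x,\cC}$; if the two maps coincide there is nothing to prove. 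By hypothesis (3), $\delta(t) \to 0$ as $t \to 0$.

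\textbf{Step 2: $B$ is bounded.} Each $f\in B$ has $\norm{f}\le \norm{f-\Phi_t(f)}+\norm{\Phi_t(f)}\le \delta(t)+\norm{f}_2$. A cleaner route is Proposition \ref{prop: Lipschitz seminorm basic properties} (1) with $\varphi=\tau$, which gives $\norm{f} \leq 2R$. This requires $(A\otimes A,\iota_1,\iota_2)\in\cC$, which holds for the trace-preserving class by Remark \ref{rem: trace preserving finite diameter}. Without that assumption one can argue via Step 1 instead: $\norm{f}_2 \leq \norm{f - \Phi_t f}_2 + \norm{\Phi_t f}_2$, and since $\tau(f)=0$ one needs a spectral gap for $\Phi_t$ on $L^2_0$. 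A spectral gap follows from compactness together with ergodicity, but ergodicity is not given. This is the point I expect to be the main obstacle, and I would resolve it by invoking the tensor coupling.

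\textbf{Step 3: total boundedness of $\Phi_t(B)$.} Write $\Phi_t = \Phi_{t/2}\Phi_{t/2}$. Then $\Phi_{t/2}$ maps the norm-bounded set $B$ (bounded in $M$, hence in $L^2$) to a relatively compact set in $L^2(M,\tau)$ by hypothesis (1). Next, $\Phi_{t/2}: L^2 \to M$ is bounded by hypothesis (2), so it maps $L^2$-totally bounded sets to operator-norm totally bounded sets. Hence $\Phi_t(B)$ is totally bounded in $\norm{\cdot}_M$.

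\textbf{Conclusion.} Given $\varepsilon>0$, choose $t$ with $\delta(t)<\varepsilon/2$ and take an $\varepsilon/2$-net for $\Phi_t(B)$. By Step 1 this is an $\varepsilon$-net for $B$. Finally, the conditions (1)--(3) of Definition \ref{def:qcms} and the Leibniz property come from Proposition \ref{prop: Lipschitz seminorm basic properties}; $*$-invariance holds since $\alpha,\beta$ are $*$-maps and the $x_i$ are self-adjoint. Theorem \ref{thm: total boundedness criterion} with $\varphi_0=\tau$ then yields the compact quantum metric space.
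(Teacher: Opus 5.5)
Your proposal is correct and follows the paper's proof essentially step for step. The three steps are the same: the dilation estimate $\norm{f-\Phi_t(f)} \le L_{x,\cC}(f)\,\sup_i\norm{\alpha_t\iota(x_i)-\iota(x_i)}$, boundedness of the set via Proposition \ref{prop: Lipschitz seminorm basic properties}(1), and total boundedness of $\Phi_t(\text{ball})=\Phi_{t/2}\Phi_{t/2}(\text{ball})$ from compactness plus ultracontractivity. You also rightly point out that the boundedness step (and the kernel condition) tacitly needs $(A\otimes A,\iota_1,\iota_2)\in\cC$, which the paper leaves implicit, and $2R$ is indeed the bound that proposition gives (the paper writes $B_R^M(0)$).
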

		
		\begin{remark}
			It may be natural to assume that $\Phi_t$ maps $A$ into $A$ (and hence also $L^2(A,\tau)$ into $A$), as it seems difficult to imagine that (2) or (3) holds without $\Phi_t(A) \subseteq A$.  However, since the proof does not require $\Phi_t(A) \subseteq A$, we omit this assumption.
		\end{remark}
		
		\begin{proof}[{Proof of Proposition \ref{prop: total boundedness via semigroup}}]
			Let $S = \{f \in M: L_{x,w}(f) \leq 1, \tau(f) = 0 \}$, and recall that $S$ is contained in the norm-ball $B_R^M(0)$ by Proposition \ref{prop: Lipschitz seminorm basic properties} (1).  To show total boundedness of $S$, it suffices to show that for each $\varepsilon > 0$, $S$ is contained in the $\varepsilon$-neighborhood of some totally bounded set.  Note that for $f \in S$,
			\begin{align*}
				\norm{\Phi_t(f) - f}_M &= \norm{\iota^* \alpha_t \iota(f) - f}_M \\
				&= \norm{\iota^*(\alpha_t \iota(f) - \iota(f))}_M \\
				&\leq \norm{\alpha_t \iota(f) - \iota(f)}_N \\
				&\leq L_{x,\cC}(f) \sup_{i \in I} \norm{\alpha_t \iota(x_i) - \iota(x_i)}_N \\
				&\leq \sup_{i \in I} \norm{\alpha_t \iota(x_i) - \iota(x_i)}_N,
			\end{align*}
			where we have applied the first two conditions on the dilation and the definition of $L_{x,\cC}$.  Then by the third condition on the dilation, there exists some $t$ such that $\sup_{i \in I} \norm{\alpha_t \iota(x_i) - \iota(x_i)}_N < \varepsilon$.  Therefore, $S$ is contained in the $\varepsilon$-neighborhood of $\Phi_t(B_R^M(0))$.
			
			Hence, it suffices to show that $\Phi_t(B_R^M(0))$ is totally bounded.  Since $B_R^M(0)$ is a bounded subset of $L^2(M,\tau)$ and we assumed (1) that $\Phi_{t/2}$ defines a compact operator on $L^2(M,\tau)$, we see that $\Phi_{t/2}(B_R^M(0))$ is totally bounded in $L^2(M,\tau)$.  Then by the ultracontracitivity assumption (2), $\Phi_{t/2}$ is a bounded map from $L^2(M,\tau)$ to $M$, and so it maps totally bounded subsets of $L^2(M,\tau)$ to totally bounded subsets of $M$.  Hence, $\Phi_t(B_R^M(0)) = \Phi_{t/2}(\Phi_{t/2}(B_R^M(0))$ is totally bounded in $M$, as desired.
			
			The total boundedness of $S$, together with Proposition \ref{prop: Lipschitz seminorm basic properties}, shows that $(A,L_{x,\cC})$ is a compact quantum metric space.
		\end{proof}
		
		\begin{corollary} \label{cor: general Lip norm for q Gaussian}
			Consider the $q$-Gaussian algebra $\mathcal{A}_q(\R^d)$ with its canonical generators $(x_i)_{i=1}^d$.  Let $\mathcal{C} = \mathcal{C}_{\tr}(\mathcal{A}_q(\R^d),\tau)$ be the class of trace-preserving embeddings as in Example \ref{ex: trace preserving}.  Then $L_{x,\mathcal{C}}$ defines a compact quantum metric space structure on $\mathcal{A}_q(\R^d)$.
		\end{corollary}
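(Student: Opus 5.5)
The plan is to apply Proposition \ref{prop: total boundedness via semigroup} with $M$ the $q$-Gaussian von Neumann algebra $\Gamma_q(\R^d) = \mathcal{A}_q(\R^d)''$, $\tau$ the vacuum state, and $\Phi_t = e^{-tD_q}$ the $q$-Ornstein--Uhlenbeck semigroup. This semigroup is the second quantization $\Gamma_q(e^{-t}\id)$, which acts on Wick words of length $n$ by multiplication by $e^{-nt}$. Second quantization of a contraction gives a unital completely positive, trace-preserving map, so $(\Phi_t)$ is a quantum Markov semigroup. Combined with Remark \ref{rem: trace preserving finite diameter} and Proposition \ref{prop: Lipschitz seminorm basic properties}, the proposition then yields the compact quantum metric space conclusion. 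So it remains to verify hypotheses (1)--(3).

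\emph{Compactness.} Since $H = \C^d$ is finite-dimensional, each level $(H^{\otimes n})_q$ is finite-dimensional. Hence $\Phi_t = \sum_n e^{-nt}P_n$ is a norm limit of finite-rank operators on $\mathcal{F}_q(H) \cong L^2(M,\tau)$, and is therefore compact.

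\emph{Ultracontractivity.} For $\xi \in L^2(M)$, write $\xi = \sum_n \xi_n$ with $\xi_n \in (H^{\otimes n})_q$, and let $\Psi(\xi_n)$ denote the corresponding Wick product. Bo\.zejko's Haagerup-type inequality \cite{Bozejko1999ultracontractivity}, the same input used in the proof of Theorem \ref{thm: q Gaussian length}, gives $\norm{\Psi(\xi_n)} \le C_q (n+1)\norm{\xi_n}_2$. Hence
\[
\norm{\Phi_t \xi}_M \le \sum_n e^{-nt} C_q (n+1) \norm{\xi_n}_2 \le C_q \Bigl(\sum_n (n+1)^2 e^{-2nt}\Bigr)^{1/2} \norm{\xi}_2 .
\]
The sum converges for $t > 0$, so $\Phi_t$ is bounded from $L^2(M,\tau)$ into $A \subseteq M$. (For $q = -1$ everything is finite-dimensional, so this step is trivial.)

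\emph{Continuous dilation.} This is the step I expect to be the main obstacle, because $\Phi_t = \Gamma_q(e^{-t})$ is a contraction semigroup and must be realized as a compression of automorphisms. I would use the standard rotation trick on $H_\R \oplus H_\R$. Let $N = \Gamma_q(\R^d \oplus \R^d)$ with its vacuum trace, and let $\iota$ be the second quantization of $h \mapsto (h,0)$; this is a trace-preserving embedding whose adjoint $\iota^*$ is the conditional expectation. Take the orthogonal group $R_s = \begin{bmatrix}\cos s & -\sin s\\ \sin s & \cos s\end{bmatrix}$ and set $\alpha_s = \Gamma_q(R_s)$, a one-parameter group of trace-preserving automorphisms of $N$. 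Then $\iota^*\alpha_s\iota = \Gamma_q(\cos s \cdot \id)$. Reparametrizing $\cos s(t) = e^{-t}$ gives $\iota^*\alpha_{s(t)}\iota = \Phi_t$. The first bullet of hypothesis (3) only needs this for each $t$, since the proof of the proposition uses a single time $t$. Each triple $(N,\iota|_A,\alpha_{s}\circ\iota|_A)$ consists of trace-preserving unital $*$-homomorphisms into a C$^*$-algebra with faithful trace. Strictly I would use the C$^*$-algebra $\mathcal{A}_q(\R^{2d}) \subseteq N$, which is preserved by $\alpha_s$ and carries a faithful vacuum trace, so the triple lies in $\mathcal{C}_{\tr}$. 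Finally,
\[
\alpha_s\iota(x_i) - \iota(x_i) = X\bigl((\cos s - 1)e_i, \sin s\, e_i\bigr),
\]
whose norm is at most $\frac{2}{\sqrt{1-q}}\norm{((\cos s-1)e_i,\sin s\,e_i)} \to 0$ as $s \to 0$, uniformly in $i$ (for $q \le 0$ the constant is $2$). Here $s \to 0$ corresponds to $t \to 0$.

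With (1)--(3) verified, Proposition \ref{prop: total boundedness via semigroup} gives the total boundedness of $\{f : L_{x,\mathcal{C}}(f) \le 1, \tau(f) = 0\}$. The remaining properties of a compact quantum metric space come from Proposition \ref{prop: Lipschitz seminorm basic properties}, with Remark \ref{rem: trace preserving finite diameter} ensuring the tensor coupling lies in $\mathcal{C}$, and Theorem \ref{thm: total boundedness criterion} then completes the argument.
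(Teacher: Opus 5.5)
Your proof is correct. Its skeleton is the same as the paper's: you apply Proposition~\ref{prop: total boundedness via semigroup} to the $q$-Ornstein--Uhlenbeck semigroup $e^{-tD_q}$, with compactness coming from the finite-dimensional Fock levels and ultracontractivity from Bo\.zejko. The real difference is the dilation.

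The paper takes $H_\R = L^2(\R;\R)\otimes\R^d$, the shift $\hat{\alpha}_t f(s) = f(s-2t)$, and the unit vector $f_0(s) = e^{s/2}$ for $s\le 0$ (zero otherwise). This gives $\hat{\iota}^*\hat{\alpha}_t\hat{\iota} = e^{-t}$ for all $t\ge 0$ at once. Functoriality then yields an honest one-parameter automorphism group with $\iota^*\alpha_t\iota = \Phi_t$, so hypothesis (3) of the proposition is verified exactly as stated.

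Your rotation on $\R^d\oplus\R^d$ is more elementary and stays inside $\mathcal{A}_q(\R^{2d})$. However, it realizes each $\Phi_t$ only separately, as $\iota^*\alpha_{s(t)}\iota$ with $\cos s(t) = e^{-t}$. The family $t\mapsto\alpha_{s(t)}$ is not a group, so hypothesis (3) is not literally met. You rightly observe that this is harmless. The proposition's proof uses the dilation at a single small time, and the semigroup law is needed only for $\Phi$ itself, in $\Phi_t=\Phi_{t/2}\circ\Phi_{t/2}$. In effect your route shows that hypothesis (3) can be weakened to factorizability of each individual $\Phi_t$, provided the displacement of the generators tends to $0$.

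The quantitative input is the same in both models: the displaced vector has norm $\sqrt{2-2e^{-t}}$. Two further points are slightly more explicit than the paper, which just cites Bo\.zejko's theorem. You derive ultracontractivity from the Haagerup-type bound via Cauchy--Schwarz; the exact $q$-dependent constant is immaterial. You also treat $q=-1$ separately, where $C_q^{-1}=\prod_m(1-q^m)$ vanishes.
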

		
		\begin{proof}
			We apply Proposition \ref{prop: total boundedness via semigroup} taking $\Phi_t$ to be the $q$-Gaussian Ornstein--Uhlenbeck semigroup as in \cite{Bozejko1999ultracontractivity}.  We recall that the action of $\Phi_t$ on $L^2(\mathcal{A}_q(\R^d),\tau)$ is given in the Fock space model by $e^{-tD_q}$ where $D_q$ is the length operator we used in \S \ref{subsec: q Gaussian length proof}, which is clearly a compact operator.  The ultracontractivity of $\Phi_t$ is Bo{\.z}ejko's main result in \cite{Bozejko1999ultracontractivity}.
			
			Next, we recall the dilation of $\Phi_t$ to a family of automorphisms via Bo{\.z}ejko, K{\"u}mmerer, and Speicher's $q$-Gaussian functor.  Let $H_{\R} = L^2(\R;\R) \otimes_{\R} \R^d$.  Define an automorphism $\hat{\alpha}_t$ of $L^2(\R) \otimes_{\R} \R^d = L^2(\R;\R^d)$ by $\hat{\alpha}_t(f)(s) = f(s - 2t)$.  Let
			\[
			f_0(t) = \begin{cases}
				e^{t/2}, & t \leq 0 \\
				0, & t > 0.
			\end{cases},
			\]
			so that $f_0$ is a unit vector in $L^2(\R;\R)$.  Define an embedding $\hat{\iota}: \R^d \to H_{\R}$ by $I(e_j) = f_0 \otimes e_j$.  We note that for $t \geq 0$
			\[
			\ip{f_0 \otimes v, \widehat{\alpha}_t f_0 \otimes w} = \int_{-\infty}^0 e^{s/2} e^{(s-2t)/2}\,ds \, \ip{v,w} = e^{-t} \ip{v,w}.
			\]
			Therefore,
			\[
			\hat{\iota}^* \hat{\alpha}_t \hat{\iota}(v) = e^{-t} v.
			\]
			By \cite[Theorem 2.11]{BoKuSp1997qgaussian}, $\hat{\iota}$ induces a trace-preserving inclusion $\iota: \mathcal{A}_q(\R^d) \to \mathcal{A}_q(H_{\R})$, $\hat{\iota}^*$ induces the corresponding conditional expectation $\mathcal{A}_q(H_{\R}) \to \mathcal{A}_q(\R^d)$, $\hat{\alpha}_t$ induces an automorphism of $\mathcal{A}_q(H_{\R})$.  By functoriality, the $\iota^* \alpha \iota$ is the map on $\mathcal{A}_q(\R^d)$ induced by the operator $e^{-t}1$ on $\R^d$, which is exactly the semigroup $\Phi_t$ (see \cite[Example 4.11]{BoKuSp1997qgaussian}).
			
			This dilation satisfies all the properties we need in Proposition \ref{prop: total boundedness via semigroup}.  Indeed, the maps $\alpha_t \circ \iota$ are trace-preserving and hence $(\mathcal{A}_q(H_{\R}),\iota,\alpha_t \circ \iota)$ is in the class $\mathcal{C}$.  Moreover, $\alpha_t \circ \iota(x_i) - \iota(x_i)$ is the $q$-Gaussian operator associated to the vector $(\hat{\alpha}_t - \id)(f_0 \otimes e_i)$.  As $t \to 0$, the norm of the vector vanishes, and hence so does the norm of the corresponding $q$-Gaussian operator.  Thus, $\lim_{t \to 0^+} \max_i \norm{\alpha_t \circ \iota(x_i) - \iota(x_i)} = 0$.  Therefore, Proposition \ref{prop: total boundedness via semigroup} implies the desired conclusions.
		\end{proof}
		
		\subsection{Application to free Gibbs laws} \label{subsec: free transport}
		
		Free Gibbs laws are the analog of measures on $\R^d$ with density $e^{-V}$.  Taking a self-adjoint non-commutative polynomial $f$ such that $e^{-n^2 \tr(f)}$ is integrable on $(\mathbb{M}_n)_{\sa}^d$, one can then set $V = \tr(f)$ and define a probability measure $\mu_{V}^{(n)}$ on $(\mathbb{M}_n)_{\sa}^d$ by
		\begin{equation} \label{eq: matrix model from potential}
			d\mu_V^{(n)}(X) = \frac{e^{-n^2 V(X)}}{\int_{(\mathbb{M}_n)_{\sa}^d} e^{-n^2 V}}\,dX.
		\end{equation}
		Letting $X^{(n)}$ be a random variable with distribution $\mu_V^{(n)}$, one can ask whether $\lim_{n \to \infty} \mathbb{E} \tr(p(X^{(n)}))$ exists for all non-commutative polynomials $p$.  This holds in various settings where $V$ is strongly convex \cite{GMS2006,GS2009,Jekel2020Entropy,JekelLiShlyakhtenko2022} (with some additional analytic hypotheses that are slightly different in each paper). In fact, one can then show that there exists some tuple $x$ in a tracial von Neumann algebra $(M,\tau)$ such that
		\[
		\lim_{n \to \infty} \tr(p(X^{(n)})) = \tau(p(x)) \text{ almost surely}
		\]
		for every $p$.  Our goal in this section is to describe a quantum metric space structure on the $\mathrm{C}^*$-algebra $A$ generated by $x$ using trace-preserving embeddings as in Example \ref{ex: trace preserving}.
		
		We will state and prove the result in the general setting of non-commutative smooth functions from \cite{JekelLiShlyakhtenko2022}.  To this end, we first recall the definition of non-commutative smooth functions.  They are constructed as a completion of the space of trace polynomials with respect to certain seminorms.  We first recall that $d$-variable \emph{trace polynomials} are the algebra spanned by formal terms of the form $p_0 \tr(p_1) \dots \tr(p_m)$ where $p_0$, \dots, $p_m$ are $d$-variable non-commutative polynomials and $\tr(p_j)$ is considered here as a formal symbol, under the following natural relations:
		\begin{itemize}
			\item $\tr(p)$ is linear in $p$.
			\item $\tr(pq) = \tr(qp)$
			\item $\tr(p)q = q \tr(p)$ and $\tr(p) \tr(q) = \tr(p) \tr(q)$.
			\item $\tr(1) = 1$
			\item The multiplication is given by
			\[
			(p_0 \tr(p_1) \dots \tr(p_m))(q_0 \tr(q_1) \dots \tr(q_n)) = (p_0 q_0) \tr(p_1) \dots \tr(p_m) \tr(q_1) \dots \tr(q_n).
			\]
		\end{itemize}
		For a more precise explanation of the definition, see \cite[\S 3.1]{JekelLiShlyakhtenko2022}.
		
		Given a $\mathrm{C}^*$-algebra $A$ with a trace $\tau$ and $x \in A_{\sa}^d$, one can define the evaluation $\ev_{A,\tau,x}(f)$ for trace polynomials $f$ as the unique algebra homomorphism such that
		\[
		\ev_{A,\tau,x}(\tr(p)) = \tau(p(x)), \qquad \ev_{A,\tau,x}(p) = p(x),
		\]
		for non-commutative polynomials $p$.  Let $\ev_{A,\tau}(f): A_{\sa}^d \to A$ be given by $\ev_{A,\tau}(f)(x) = \ev_{A,\tau,x}(f)$.  It turns out that $\ev_{A,\tau}(f)$ is a Fr{\'e}chet smooth function $A_{\sa}^d \to A$.  Let $\mathcal{M}_d^k(A)$ denote the set of real $k$-multilinear maps $(A^d)^k \to A$, and let
		\[
		\partial^k \ev_{A,\tau}(f): A_{\sa}^d \to \mathcal{M}_d^k(A)
		\]
		by the $k$th Fr{\'e}chet derivative of $\ev_{A,\tau,x}(f)$.  We then evaluate certain norms of the multilinear map $\partial^k \ev_{A,\tau,x}(f)$ given as follows.  Note that every real $k$-multilinear map $\Lambda: (A_{\sa}^d)^k \to A$ extends uniquely to a complex $k$-multilinear map $\Lambda_{\C}: (A^d)^k \to A$.  We furthermore define
		\begin{align*}
			\norm{\Lambda}_{\mathcal{M}_d^k(A)} &= \sup \{ \norm{\Lambda_{\C}(y_1,\dots,y_k)}_r: \\
			& \qquad y_j \in A^d, \norm{y_j}_{r_j} \leq 1,\\
			&\qquad r_j \in [1,\infty], r_1^{-1} + \dots + r_k^{-1} = r^{-1} \},
		\end{align*}
		where $\norm{y}_r$ is the non-commutative $L^r$-norm defined for $y \in A^d$ by
		\[
		\norm{y}_r := \begin{cases} \left( \sum_{j=1}^d \tau( |y_j|^r ) \right)^{1/r}, & r < \infty \\ \max_j(\norm{y_j}), & r = \infty. \end{cases} 
		\]
		We can thus evaluate the seminorm $\norm{\partial^k \ev_{A,\tau}(f)(x)}_{\mathcal{M}_d^k(A)}$ for each $x \in A_{\sa}^d$.  One can show this norm is finite whenever $f$ is a trace polynomial by using the non-commutative H{\"o}lder's inequality (see \cite[p. 30]{JekelLiShlyakhtenko2022}).  We finally define a universal seminorm of the $k$th derivative of $f$ by considering all possible choices of $A$ and $x$.  Let
		\[
		\norm{\partial^k f}_R = \sup \{ \norm{\partial^k \ev_{A,\tau}(f)(x)}_{\mathcal{M}_d^k(A)}: (A,\tau) \text{ tracial $\mathrm{C}^*$-algebra}, x \in A_{\sa}^d, \norm{x}_\infty \leq R \}.
		\]
		Here we use the expression ``$\partial^k f$'' in a formal sense, because we have not stated here what $\partial^k f$ abstractly; as explained in \cite{JekelLiShlyakhtenko2022}, one can define a trace polynomial $\partial^k f(x)[y_1,\dots,y_k]$ in self-adjoint $d$-tuples $x$, $y_1$, \dots, $y_k$, but we will not go into further detail here.
		
		The space of tracial non-commutative smooth functions $C_{\tr}^\infty(\R^{*d})$ is defined as the Fr{\'e}chet-space completion of the space of $d$-variable trace polynomials with respect to the family of seminorms $\norm{\partial^k f}_R$ for $R > 0$ and $k \in \N$.  Similarly, $C_{\tr}^k(\R^{*d})$ is defined as the completion with respect to $\norm{\partial^{k'} f}_R$ for $R > 0$ and $k' = 0$, \dots, $k$.
		
		The next lemma relates tracial noncommutative smooth functions with the Lip-norms from the previous section.  This lemma will also be the basis for transferring quantum metric space properties using noncommutative smooth transport from \cite{JekelLiShlyakhtenko2022}.
		
		\begin{lemma} \label{lem: smooth to Lipschitz}
			Let $(A,\tau)$ be a tracial $\mathrm{C}^*$-algebra generated by self-adjoint elements $(x_i)_{i=1}^d$ with $\norm{x_i} \leq R$.  Let $\mathcal{C}$ be a collection of triples $(\tilde{A},\alpha,\beta)$ contained in $\mathcal{C}_{\tr}(A,\tau)$.  Let $f \in C_{\tr}^1(\R^{*d})$.  Then
			\[
			L_{x,\mathcal{C}}(\ev_{A,\tau}(f)(x)) \leq \norm{\partial f}_R.
			\]
		\end{lemma}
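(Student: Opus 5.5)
Fix a triple $(\tilde{A},\alpha,\beta) \in \mathcal{C}$ with $\alpha \neq \beta$. By the definition of $L_{x,\mathcal{C}}$ (Definition \ref{def: Lipschitz seminorm via couplings}), it suffices to show
\[
\norm{\alpha(\ev_{A,\tau}(f)(x)) - \beta(\ev_{A,\tau}(f)(x))}_{\tilde{A}} \leq \norm{\partial f}_R \max_j \norm{\alpha(x_j) - \beta(x_j)}_{\tilde{A}}.
\]
The first step is equivariance: since $\alpha$ is a unital $*$-homomorphism with $\tilde{\tau} \circ \alpha = \tau$, we have $\alpha(\ev_{A,\tau,x}(g)) = \ev_{\tilde{A},\tilde{\tau},\alpha(x)}(g)$ for every trace polynomial $g$. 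This holds because $\alpha(p(x)) = p(\alpha(x))$ and $\alpha(\tau(p(x))1) = \tilde{\tau}(p(\alpha(x)))1$. The same identity holds for $\beta$. This is the point where trace preservation is essential, since it makes the scalar factors $\tr(p_k)$ evaluate the same way on both sides. The identity then extends to $f \in C^1_{\tr}(\R^{*d})$ by density. Here we use that $\norm{\cdot}_R$-convergence, i.e. the $k=0$ seminorm, controls the operator norm of the evaluation at every tuple with $\norm{x}_\infty \leq R$, and that $*$-homomorphisms are contractive. We also have $\norm{\alpha(x)}_\infty, \norm{\beta(x)}_\infty \leq R$.

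Set $a = \alpha(x)$ and $b = \beta(x)$ in $\tilde{A}_{\sa}^d$, and let $F = \ev_{\tilde{A},\tilde{\tau}}(f)$. The second step is the fundamental theorem of calculus along the segment $\gamma(t) = (1-t)b + ta$. This segment is self-adjoint and satisfies $\norm{\gamma(t)}_\infty \leq R$ by convexity of the norm ball. For a trace polynomial $f$, the map $F$ is Fr\'echet smooth, so
\[
F(a) - F(b) = \int_0^1 \partial F(\gamma(t))[a-b]\,dt,
\]
with the integral taken in norm in $\tilde{A}$. Next we bound the integrand. Using the definition of $\norm{\cdot}_{\mathcal{M}_d^1(\tilde{A})}$ with $r = r_1 = \infty$, we get $\norm{\partial F(\gamma(t))[a-b]}_{\tilde{A}} \leq \norm{\partial f}_R \norm{a-b}_\infty$, because $(\tilde{A},\tilde{\tau})$ is a tracial $\mathrm{C}^*$-algebra and so is allowed in the supremum defining $\norm{\partial f}_R$. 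Integrating gives the desired bound for trace polynomials $f$.

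The last step, which I expect to be the main technical obstacle, is passing to general $f \in C^1_{\tr}(\R^{*d})$. Take trace polynomials $f_n \to f$ in the $C^1$ seminorms. The inequality $\norm{F_n(a) - F_n(b)} \leq \norm{\partial f_n}_R \norm{a-b}_\infty$ then passes to the limit. On the left, the $k=0$ seminorm gives uniform convergence of the evaluations at $a$ and $b$. On the right, $\norm{\partial f_n}_R \to \norm{\partial f}_R$, where the right-hand quantity is the extended seminorm. Alternatively, we can pass the full derivative statement to the limit.

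Finally, dividing by $\max_j \norm{\alpha(x_j)-\beta(x_j)}$ and taking the supremum over $\mathcal{C}$ proves the lemma. A minor subtlety is the normalization $\norm{x_i} \leq 1$ assumed in Definition \ref{def: Lipschitz seminorm via couplings}. This normalization plays no role in the argument above, so the definition is applied verbatim with generators of norm at most $R$.
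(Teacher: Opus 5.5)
Your proposal is correct and follows essentially the same route as the paper. Trace preservation gives $\alpha(\ev_{A,\tau}(f)(x)) = \ev_{\tilde{A},\tilde{\tau}}(f)(\alpha(x))$, and likewise for $\beta$. The bound $\norm{\partial f}_R$ on the derivative then makes $\ev_{\tilde{A},\tilde{\tau}}(f)$ Lipschitz on the convex $R$-ball with respect to $\norm{\cdot}_\infty$. Your explicit fundamental-theorem-of-calculus step and your density argument from trace polynomials to $C^1_{\tr}(\R^{*d})$ only spell out details that the paper leaves implicit.
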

		
		\begin{proof}
			Consider $(\tilde{A},\alpha,\beta) \in \mathcal{C}$.  By assumption $\tilde{A}$ has a faithful trace $\tilde{\tau}$.  Since $\alpha$, $\beta$ are trace-preserving, we also have
			\[
			\alpha(\ev_{A,\tau}(f)(x)) = \ev_{\tilde{A},\tilde{\tau}}(f)(\alpha(x)),
			\]
			and similarly for $\beta$ rather than $\alpha$.  We note that for all $y \in A_{\sa}^d$ with $\norm{y}_\infty \leq R$, the norm of the Fr{\'e}chet derivative $\partial \ev_{\tilde{A},\tilde{\tau}}(f)(y)$ as a map from $(A_{\sa}^d, \norm{\cdot}_\infty)$ to $(A_{\sa}^d,\norm{\cdot}_\infty)$ is bounded by $\norm{\partial f}_R$.  Therefore, $\ev_{\tilde{A},\tilde{\tau}}(f)$ is $\norm{\partial f}_R$-Lipschitz on $R$-ball of $A_{\sa}^d$, with respect to $\norm{\cdot}_\infty$.  We therefore have
			\[
			\norm{\ev_{\tilde{A},\tilde{\tau}}(f)(\alpha(x)) - \ev_{\tilde{A},\tilde{\tau}}(f)(\beta(x))} \leq \norm{\partial^k f}_R \norm{\alpha(x) - \beta(x)}_\infty = \norm{\partial^k f}_R \max_i \norm{\alpha(x_i) - \beta(x_i)}.
			\]
			Since this holds for all $(\tilde{A},\alpha,\beta) \in \mathcal{C}$, we have $L_{x,\mathcal{C}}(\ev_{\tilde{A},\tilde{\tau}}(f)(x)) \leq \norm{\partial f}_R$ as desired.
		\end{proof}
		
		We are now ready to state the free transport theorem from \cite{JekelLiShlyakhtenko2022} (see also \cite{GS2014,DGS2021} for similar theorems).  We define $BC_{\tr}^\infty(\R^{*d})$ as the collection of functions in $C_{\tr}^\infty(\R^{*d})$ such that
		\[
		\norm{\partial^k f}_\infty = \sup_{R > 0} \norm{\partial^k f} < \infty
		\]
		for each $k \in \N_0$.  We also define $\tr(C_{\tr}^\infty(\R^{*d})$ as the image of $C_{\tr}^\infty(\R^{*d})$ under the map $f \mapsto \tr(f)$; here $\tr(f)$ is defined so that for a trace polynomial $p_0 \tr(p_1) \dots \tr(p_m)$, we have $\tr(p_0 \tr(p_1) \dots \tr(p_m)) = \tr(p_0) \tr(p_1) \dots \tr(p_m)$, or so that $\ev_{A,\tau}(\tr(f))(x) = \tau(\ev_{A,\tau}(f)(x))$ (see \cite[Corollary 3.24]{JekelLiShlyakhtenko2022} for details).  We define $\tr(BC^\infty(\R^{*d})$ similarly.
		
		\begin{theorem}[{Free transport from \cite{JekelLiShlyakhtenko2022}}] \label{thm: free transport}
			Let $V \in \tr(C_{\tr}^\infty(\R^{*d}))$ be self-adjoint of the form $V(x) = \frac{1}{2} \sum_{j=1}^d \tr(x_j^2) + W(x)$ where $W \in \tr(BC_{\tr}^\infty(\R^{*d}))$.  Assume that
			\[
			\norm{\partial^2 W}_\infty < 1.
			\]
			\begin{enumerate}[(1)]
				\item For $X \in (\mathbb{M}_n)_{\sa}^d$, let $V(X) = \ev_{\mathbb{M}_n,\tr_n}(V)(X)$.  Let $\mu_V^{(n)}$ be the random matrix measure defined by \eqref{eq: matrix model from potential}, and let $X^{(n)}$ be the corresponding random matrix tuple in $(\mathbb{M}_n)_{\sa}^d$.  There exists a tracial von Neumann algebra $(M,\tau)$ generated by some $x = (x_i)_{i=1}^d$ such that
				\[
				\lim_{n \to \infty} \tr_n(p(X^{(n)})) = \tau(p(x)) \text{ almost surely}
				\]
				for all noncommutative polynomials $p$.  See \cite[Corollary 8.2]{JekelLiShlyakhtenko2022}.
				\item Let $A$ be the $\mathrm{C}^*$-algebra generated by $x$ with trace $\tau_A = \tau|_A$.  Let $z$ be a $d$-tuple of freely semcircular elements, and let $B$ be the $\mathrm{C}^*$-algebra generated by $z$ with trace $\tau_B$.  Then there exist functions $\mathbf{f} = (f_1,\dots,f_d)$ and $\mathbf{g} = (g_1,\dots,g_d)$ self-adjoint in $C_{\tr}^\infty(\R^{*d})$ and a trace-preserving isomorphism $\Phi: A \to B$ such that
				\begin{equation} \label{eq: free transport isomorphism}
					\Phi^{-1}(z_j) = \ev_{A,\tau_A}(f)(x_j), \qquad \Phi(x_j) = \ev_{B,\tau_B}(g)(z_j).
				\end{equation}
				See \cite[Observation 8.5, Corollary 8.6]{JekelLiShlyakhtenko2022}.
			\end{enumerate}
		\end{theorem}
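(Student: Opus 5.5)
Since this theorem is quoted from \cite{JekelLiShlyakhtenko2022}, the plan is to reconstruct the argument there in two stages: first the random matrix side (1), then the transport (2). The common tool is the matrix-valued Langevin diffusion associated to $V$. Write $V = \frac12\sum_j \tr(x_j^2) + W$ with $\norm{\partial^2 W}_\infty < 1$. Then the potential $X \mapsto n^2 V(X)$ on $(\mathbb{M}_n)_{\sa}^d$, with the normalized Hilbert--Schmidt inner product, is uniformly convex with modulus $1 - \norm{\partial^2 W}_\infty > 0$, uniformly in $n$.

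For (1), I would use this uniform convexity to get dimension-free estimates. By Bakry--\'Emery, $\mu_V^{(n)}$ satisfies a log-Sobolev inequality with constant of order $1/n^2$, which gives Gaussian concentration of Lipschitz functions such as $X \mapsto \tr_n(p(X))$ restricted to an operator-norm ball. A separate convexity argument, comparing with the Gaussian case, shows $\norm{X^{(n)}}_\infty$ is bounded by a constant with overwhelming probability. It then remains to show that $\mathbb{E}\tr_n(p(X^{(n)}))$ converges. I would run the Langevin semigroup $P_t^{(n)}$ with stationary measure $\mu_V^{(n)}$ and express $\mathbb{E}_{\mu_V^{(n)}} f$ as $\lim_{t\to\infty} P_t^{(n)} f$, starting from the Gaussian. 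By convexity, $P_t^{(n)}$ contracts exponentially in the Wasserstein/Lipschitz sense at rate $e^{-ct}$. On trace polynomials, $P_t^{(n)}$ is asymptotically given by a dimension-independent semigroup on $C_{\tr}^\infty(\R^{*d})$, obtained by solving the SDE $dX = -\nabla V(X)\,dt + \sqrt{2}\,dS_t$ with free Brownian motion $S_t$ in a free-product setting. Uniform-in-$t$ bounds on the seminorms $\norm{\partial^k(\cdot)}_R$ of the flow map then let me interchange $n \to \infty$ and $t \to \infty$. The limit state $p \mapsto \tau(p(x))$ exists, and Borel--Cantelli together with concentration upgrades convergence in expectation to almost sure convergence.

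For (2), I would build the transport map by the ``conditional expectation'' approach of \cite{JekelLiShlyakhtenko2022}. Run the free Langevin SDE started at the free Gibbs law $x$, driven by free Brownian motion, and realize the process in a larger algebra. I expect the time-$t$ solution to be an element of $C_{\tr}^\infty$ in the initial data and the Brownian increments, with derivative bounds uniform in $t$ by contractivity. Conditioning on an appropriate $W^*$-subalgebra produces maps between the law of $x$ and the semicircular law, i.e. the case $W = 0$. One realistic route interpolates: set $V_s = \frac12\sum_j\tr(x_j^2) + sW$ for $s\in[0,1]$. The convexity constant stays $\ge 1 - \norm{\partial^2 W}_\infty$, and a transport map is obtained by integrating a vector field $\xi_s$ solving $\partial^*_{V_s}\xi_s = -\partial_s V_s$, a free analogue of the Moser/Otto construction. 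The solution is given by $\xi_s = \nabla\int_0^\infty P^{V_s}_t(W - \text{mean})\,dt$, which converges in $C_{\tr}^\infty$ norms because of exponential decay. Solving the flow ODE $\dot{\mathbf{f}}_s = \xi_s\circ\mathbf{f}_s$ in $C_{\tr}^\infty(\R^{*d})^d$ gives $\mathbf{f}$, and running it backwards gives the inverse $\mathbf{g}$. By construction, evaluation at $x$ yields a tuple with semicircular law, which produces a trace-preserving $*$-isomorphism $\Phi$ between the generated $\mathrm{C}^*$-algebras satisfying \eqref{eq: free transport isomorphism}. The maps are mutually inverse as $C_{\tr}^\infty$ functions, so $\Phi(A)=B$.

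The main obstacle is the uniform regularity: proving that $P_t^{V_s}$ preserves $C_{\tr}^\infty(\R^{*d})$ with bounds on $\norm{\partial^k P_t^{V_s} f}_R$ that decay like $e^{-ct}$ for every $k$. Only then do the integral in $t$ and the flow in $s$ stay in the Fr\'echet space. The plan is to differentiate the SDE flow $k$ times. The first derivative satisfies a linear equation contracted by convexity. Higher derivatives satisfy inhomogeneous linear equations whose forcing terms involve lower derivatives and $\partial^{j}W$, which are bounded because $W \in \tr(BC_{\tr}^\infty)$, so Gr\"onwall closes the induction. The subtle point is obtaining these estimates in the non-commutative $L^r$-H\"older multilinear norms uniformly over all tracial $\mathrm{C}^*$-algebras, which is what the definition of $\norm{\cdot}_R$ requires.
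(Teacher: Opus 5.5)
The paper does not prove this theorem; it is quoted from \cite{JekelLiShlyakhtenko2022}, so your sketch has to stand on its own. Its architecture is sensible: uniform convexity and Langevin contraction for the matrix models in (1), and in (2) the interpolation $V_s=\frac12\sum_j\tr(x_j^2)+sW$ with a pseudo-inverse of the Langevin generator. However, the step you pass over with ``by construction'' is exactly where the free setting departs from the classical one. Classically, $\nabla^*_{V_s}\xi_s=-(\partial_sV_s-\mathrm{mean})$ is equivalent to the continuity equation for the densities $e^{-V_s}$, and uniqueness for that equation shows that the flow pushes $\mu_{V_0}$ to $\mu_{V_s}$. In the free setting there are no densities. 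A scalar trace function is a function of the law, so ``integrating'' it against $\mu_{V_s}$ just means evaluating it at one tuple. Your Moser equation, solved as an identity of trace functions on all tuples, therefore tells you nothing yet about the law of $\mathbf f_s(z)$. You need a separate argument. One option is a free change-of-variables (free Jacobian) computation, in which your Moser equation is exactly what makes the relevant $s$-derivative vanish, showing that $y_s=\mathbf f_s(z)$ satisfies the Schwinger--Dyson equation $\tau(\nabla V_s(y_s)\,h(y_s))=(\tau\otimes\tau)(\partial h(y_s))$ for every $s$. You would then invoke uniqueness of Schwinger--Dyson solutions for $V_s$ when $\norm{\partial^2W}_\infty<1$; that uniqueness is also what identifies the transported law with the matrix limit from (1). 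The other option is to realize the free flow as the $n\to\infty$ limit of the classical Moser maps for the matrix models. That requires convergence of $P_t^{(n)}$ to the free semigroup together with first derivatives, not just values.

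Three smaller corrections. First, with $\partial^*_V\nabla=-L_V$, the solution of $\partial^*_{V_s}\xi_s=-(W-\mathrm{mean})$ is $\xi_s=-\nabla\int_0^\infty P_t^{V_s}(W-\mathrm{mean})\,dt$, so your sign is off. Also, the forward flow from $s=0$ evaluated at $z$ has law $\mu_V$, so it produces $\mathbf g$, not $\mathbf f$. Second, drop your first sketch for (2). The Langevin process started at the free Gibbs law is stationary, and conditioning it does not by itself give a map onto a $d$-tuple of semicirculars. Conditional expectations are useful elsewhere: they show that $P_t^V$ preserves limits of trace polynomials, because $E_{W^*(x)}[p(x,S)]$ is a trace polynomial in $x$ when $S$ is a semicircular family free from $x$. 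Third, the ``subtle point'' you raise is settled by the hypothesis itself, not by convexity. Convexity only contracts the linearized flow in $L^2$. Instead, write $\dot\delta_t=-\delta_t-\partial(\nabla W)(x_t)[\delta_t]$ and use that the multilinear norm controls $\partial(\nabla W)(x)$ on every $L^r$. Duhamel's formula and Gr\"onwall then give $\norm{\delta_t}_r\le e^{-(1-\norm{\partial^2W}_\infty)t}\norm{\delta_0}_r$ for all $r\in[1,\infty]$, and the higher-order derivative equations are handled the same way.
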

		
		\begin{corollary} \label{cor: free Gibbs case}
			Let $V, A,\tau_A,x, B,\tau_B,z$ be as in Theorem \ref{thm: free transport}.  Let $\mathcal{C} = \mathcal{C}_{\tr}(A,\tau_A)$ be as in Example \ref{ex: trace preserving}.  Then the Lipschitz seminorm $L_{x,\mathcal{C}}$ defines a compact quantum metric space structure on $A$.
		\end{corollary}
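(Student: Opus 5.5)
The plan is to transfer the total boundedness already established for the free semicircular family ($q=0$ in Corollary \ref{cor: general Lip norm for q Gaussian}) to $A$ through the trace-preserving isomorphism $\Phi$ of Theorem \ref{thm: free transport}(2), comparing Lip-norms with the change of coordinates Lemma \ref{lem: change of coordinates} and Lemma \ref{lem: smooth to Lipschitz}. Throughout, generators are rescaled to lie in the unit ball as Definition \ref{def: Lipschitz seminorm via couplings} requires; this only changes Lip-norms by constant factors.

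First, the class $\mathcal{C}_{\tr}$ is natural under trace-preserving isomorphisms. If $(\tilde{A},\alpha,\beta) \in \mathcal{C}_{\tr}(B,\tau_B)$, then $(\tilde{A},\alpha\circ\Phi,\beta\circ\Phi) \in \mathcal{C}_{\tr}(A,\tau_A)$, and the converse holds with $\Phi^{-1}$. Consequently
\[
L_{\Phi^{-1}(z),\mathcal{C}_{\tr}(A,\tau_A)}(f) = L_{z,\mathcal{C}_{\tr}(B,\tau_B)}(\Phi(f)) \quad \text{for } f \in A.
\]
Set $y = \Phi^{-1}(z)$, a semicircular generating tuple of $A$. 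Since $\Phi$ is isometric and trace-preserving, Corollary \ref{cor: general Lip norm for q Gaussian} gives that $\{f \in A : L_{y,\mathcal{C}}(f) \le 1,\ \tau_A(f)=0\}$ is totally bounded.

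Second, I would compare $L_x$ with $L_y$. By \eqref{eq: free transport isomorphism}, $y_j = \ev_{A,\tau_A}(f_j)(x)$ with $f_j \in C^\infty_{\tr}(\R^{*d})$. Lemma \ref{lem: smooth to Lipschitz} then gives $L_{x,\mathcal{C}}(y_j) \le \norm{\partial f_j}_R$, where $R = \max_i \norm{x_i}$, and this is finite. Symmetrically, $\Phi(x_j) = \ev_{B,\tau_B}(g_j)(z)$ gives $L_{z,\mathcal{C}_{\tr}(B)}(\Phi(x_j)) \le \norm{\partial g_j}_2$. By the naturality above, this says $L_{y,\mathcal{C}}(x_j) < \infty$. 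Lemma \ref{lem: change of coordinates} now yields a constant $K$ with
\[
L_{y,\mathcal{C}}(f) \le K\, L_{x,\mathcal{C}}(f) \quad \text{for all } f \in A.
\]
Hence $\{f : L_{x,\mathcal{C}}(f)\le 1,\ \tau_A(f)=0\}$ lies inside $\{f : L_{y,\mathcal{C}}(f)\le K,\ \tau_A(f)=0\}$, which is $K$ times a totally bounded set and so is totally bounded. Rieffel's criterion, Theorem \ref{thm: total boundedness criterion}, then applies; the hypotheses (1)–(3) of Definition \ref{def:qcms} come from Proposition \ref{prop: Lipschitz seminorm basic properties} together with Remark \ref{rem: trace preserving finite diameter}.

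The main obstacle is bookkeeping rather than analysis. One point is the direction of the change of coordinates: we need $\sup_j L_{y}(x_j)<\infty$ to get $L_y \lesssim L_x$, so the function $\mathbf{g}$ with its transport through $\Phi$ does the real work. The other point is checking that the Lemma \ref{lem: smooth to Lipschitz} bounds hold with the right radius $R$ after rescaling, and that $\ev$ commutes with the trace-preserving $*$-homomorphism $\Phi$. The latter holds because trace polynomials are preserved by trace-preserving homomorphisms, and it extends to $C^1_{\tr}$ by continuity.
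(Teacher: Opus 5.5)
Your proposal is correct and follows the same route as the paper. You transfer the $q=0$ case of Corollary \ref{cor: general Lip norm for q Gaussian} to $A$ through $\Phi$, then use Lemma \ref{lem: smooth to Lipschitz} and Lemma \ref{lem: change of coordinates} to compare $L_{x,\mathcal{C}}$ with $L_{\Phi^{-1}(z),\mathcal{C}}$. You also note correctly that total boundedness needs only the one-sided bound $\sup_j L_{y,\mathcal{C}}(x_j)<\infty$, which $\mathbf{g}$ supplies, rather than the two-sided equivalence the paper invokes.
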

		
		\begin{proof}
			Let $\mathcal{C}' = \mathcal{C}_{\tr}(B,\tau_B)$ be the corresponding Lipschitz seminorm for the $\mathrm{C}^*$-algebra $B$ generated by the free semicircular family $z$.  By Corollary \ref{cor: general Lip norm for q Gaussian}, we have that $L_{z,\mathcal{C}'}$ defines a compact quantum metric space structure on $B$.  Using the isomorphism $\Phi$, we see that $L_{\Phi^{-1}(z),\mathcal{C}}$ defines a compact quantum metric space structure on $(A,\tau_A)$.
			
			By \eqref{eq: free transport isomorphism}, $\Phi^{-1}(z)$ can be expressed by the evaluation of $BC_{\tr}^\infty(\R^{*d})$ functions on $x$ and vice versa.  Therefore, by Lemma \ref{lem: smooth to Lipschitz}, $L_{x,\mathcal{C}}(\Phi^{-1}(z_j))$ and $L_{\Phi^{-1}(z),\mathcal{C}}(x_j)$ are finite.  Hence, by Lemma \ref{lem: change of coordinates}, $L_{x,\mathcal{C}}$ and $L_{\Phi^{-1}(z),\mathcal{C}}$ are equivalent up to constants.  Hence, the total boundedness condition transfers from $L_{\Phi^{-1}(z),\mathcal{C}}$ to $L_{x,\mathcal{C}}$, and so $L_{x,\mathcal{C}}$ defines a compact quantum metric space structure on $A$, as claimed.
		\end{proof}
		
		\bibliographystyle{plain}
		\bibliography{freeNCG}
		
	\end{document}